
\documentclass[10pt,twoside]{siamart1116}

\usepackage[english]{babel}
\usepackage{graphicx,epstopdf,epsfig}
\usepackage{amsfonts,epsfig,fancyhdr,graphics, hyperref,amsmath,amssymb}
\usepackage{amsmath,amssymb}
\usepackage{graphicx}
\usepackage{url}
\usepackage[mathlines]{lineno}
\usepackage{dsfont} 
\usepackage{color}
\usepackage{mathtools}
\usepackage{array}

\setlength{\textheight}{210mm}
\setlength{\textwidth}{165mm}
\topmargin = -10mm

\setlength{\parskip}{.1in}










 


\numberwithin{figure}{section}   
\numberwithin{table}{section}   

\newcommand*\patchAmsMathEnvironmentForLineno[1]{%
  \expandafter\let\csname old#1\expandafter\endcsname\csname #1\endcsname
  \expandafter\let\csname oldend#1\expandafter\endcsname\csname end#1\endcsname
  \renewenvironment{#1}%
     {\linenomath\csname old#1\endcsname}%
     {\csname oldend#1\endcsname\endlinenomath}}%
\newcommand*\patchBothAmsMathEnvironmentsForLineno[1]{%
  \patchAmsMathEnvironmentForLineno{#1}%
  \patchAmsMathEnvironmentForLineno{#1*}}%
\AtBeginDocument{%
\patchBothAmsMathEnvironmentsForLineno{equation}%
\patchBothAmsMathEnvironmentsForLineno{align}%
\patchBothAmsMathEnvironmentsForLineno{flalign}%
\patchBothAmsMathEnvironmentsForLineno{alignat}%
\patchBothAmsMathEnvironmentsForLineno{gather}%
\patchBothAmsMathEnvironmentsForLineno{multline}%
}

\definecolor{red}{rgb}{1,0,0}

\definecolor{blue}{rgb}{0,0,1}

\definecolor{green}{rgb}{0,.6,0}


\newtheorem{thm}{Theorem}[section]
\newtheorem{cor}[thm]{Corollary}

\newtheorem{prop}[thm]{Proposition}
\newtheorem{conj}[thm]{Conjecture}
\newtheorem{obs}[thm]{Observation}

\theoremstyle{definition}

\theoremstyle{definition}

\theoremstyle{definition}

\newcommand{\F}{\mathcal{F}}
\newcommand{\osl}{\mathcal{S}}


\newcommand{\be}{{\bf e}}

\newcommand{\bone}{{\bf \mathds{1}}}
\newcommand{\bzero}{{\bf 0}}

\newcommand{\Z}{\operatorname{Z}}
\newcommand{\spec}{\operatorname{spec}}

\newcommand{\pt}{\operatorname{pt}}

\newcommand{\pr}{\mathbf{Pr}}
\newcommand{\e}{\mathbf{E}}
\newcommand{\var}{\mathbf{Var}}
\newcommand{\ept}{\operatorname{ept}}
\newcommand{\rad}{\operatorname{rad}} 
\newcommand{\sun}{\operatorname{-Sun}}
\newcommand{\comb}{\operatorname{-Comb}}

\newcommand{\tab}{$\null$\hspace{7mm}}

\newcommand{\bit}{\begin{itemize}}
\newcommand{\eit}{\end{itemize}}
\newcommand{\ben}{\begin{enumerate}}
\newcommand{\een}{\end{enumerate}}
\newcommand{\beq}{\begin{equation}}
\newcommand{\eeq}{\end{equation}}
\newcommand{\bea}{\begin{eqnarray*}} 
\newcommand{\eea}{\end{eqnarray*}}
\newcommand{\bpf}{\begin{proof}}
\newcommand{\epf}{\end{proof}\ms}
\newcommand{\bmt}{\begin{bmatrix}}
\newcommand{\emt}{\end{bmatrix}}
\newcommand{\ms}{\medskip}

\newcommand{\lf}{\left\lfloor}
\newcommand{\rf}{\right\rfloor}
\newcommand{\lp}{\!\left(}
\newcommand{\rp}{\right)}
\newcommand{\lb}{\!\left[}
\newcommand{\rb}{\right]}

\newcommand{\noi}{\noindent}

\title{Using Markov chains to determine expected propagation time for probabilistic zero forcing}
\author{Yu Chan\thanks{Department of Mathematics, Iowa State University, Ames, IA 50011, USA, (ychan,  ecurl, geneson, hogben, kevinliu, iodegard, msross)@iastate.edu.}\and Emelie Curl\footnotemark[1]\and  Jesse Geneson\footnotemark[1] \and  Leslie Hogben\footnotemark[1]\ \thanks{American Institute of Mathematics, 600 E. Brokaw Road, San Jose, CA 95112, USA, hogben@aimath.org} \and Kevin Liu\footnotemark[1]\and Issac Odegard\footnotemark[1]\and Michael Ross\footnotemark[1]}

\begin{document}
\maketitle

\begin{abstract} 
 Zero forcing is a coloring game played on a graph where each vertex is initially colored blue or white and the goal is to color all the vertices
blue by repeated use of a (deterministic) color change rule starting with as few blue vertices as possible.  Probabilistic zero forcing yields a discrete dynamical system governed by a Markov chain. Since in a connected graph any one vertex can
eventually color the entire graph blue using probabilistic zero forcing, the expected
time to do this studied. Given a Markov transition matrix for a probabilistic zero forcing process, we establish an exact formula for expected propagation time.  We apply Markov chains to determine bounds on expected propagation
time for various families of graphs.  \end{abstract}

\noi {\bf Keywords} probabilistic zero forcing, expected propagation time, Markov chain

\noi{\bf AMS subject classification} 15B51, 60J10, 05C15, 05C57, 05D40, 15B48, 60J20, 60J22 

\section{Introduction}\label{sintro}

A {\em graph}, which can be used to model relationships between objects,  is a pair $G = (V,E)$. The set  $E = E(G)$ of {\em edges} (relationships) consists of $2$-element subsets of the set  $V = V(G)$ of {\em vertices} (objects).  Two vertices $v, w$ are  {\em adjacent}  if $\{v,w\} \in E$.
Suppose a graph $G$ is colored so that every vertex is blue or white. Vertices in the graph can change color based on the {\em zero forcing color change rule}: If a blue vertex $v$ is adjacent to exactly one white vertex $w$, then the white vertex changes to blue.   In this case, we say that $v$ {\em forces} $w$ and denote this by $v \to w$. A set of vertices $S$  is called a {\em zero forcing set} if when the vertices in $S$ are colored blue and those in $V\setminus S$ are colored white, repeated application of the color change rule forces all of the vertices to be blue. The {\em zero forcing number} of a graph $G$, denoted $\Z(G)$, is the minimum cardinality of a zero forcing set \cite{AIM08}. Throughout this paper, a force performed using the zero forcing color change rule is called a {\em deterministic force}.

Zero forcing was introduced in the study of the control of quantum systems by mathematical physicists who called it the ``graph infection number" \cite{BG07, Burg09}. Zero forcing was also introduced independently in the study of the  minimum rank problem in combinatorial matrix theory to bound the maximum nullity \cite{AIM08}. Zero forcing and its positive semidefinite  variant have been used extensively in the study of  the minimum rank problem (see \cite{FH14} and the references therein). 
Parameters derived from zero forcing have also been studied. Examples include propagation time (e.g.  
\cite{Hog12, PSDpropTime}) and throttling (e.g. \cite{BY13}). Zero forcing also has connections to graph searching \cite{Y13} and power domination \cite{PDZ}. 

Two vertices are called  {\em neighbors} if they are adjacent, and the set of neighbors of a vertex $v$ in $G$ is denoted by $N(v)$. The {\em closed neighborhood} of a vertex $v$ is $N[v] = N(v) \cup \left\{v\right\}$.
A variant of zero forcing called \emph{probabilistic zero forcing} was introduced by Kang and Yi \cite{KY13} and is defined as follows: In one \emph{round}, each blue vertex $u$ attempts to \emph{force} (change the color to blue) each of its white neighbors $w$ independently with probability 
\[\pr(u \to w) = \frac{|N[u] \cap B|}{\deg u},\] where $B$ denotes the set of blue vertices. Because a vertex $u$ attempts to force each of its white neighbors independently, this action is a binomial (or Bernoulli) experiment with probability of success given by the previous formula. This color change rule is known as the {\em probabilistic color change rule}, and {\em probabilistic zero forcing} refers to the process of coloring a graph blue by repeated application of the probabilistic color change rule.

The study of probabilistic zero forcing therefore produces a discrete dynamical system that plausibly
describes many real world applications. Some of these applications include modeling the spread of a rumor through a social network, the spread of an infectious disease in a population, or the dissemination of a computer virus in a network. In addition, this type of zero forcing offers a new approach to coloring a graph. It should be noted that while for traditional zero forcing, the parameter of primary interest is the minimum number of vertices required to force the entire graph blue, in probabilistic zero forcing one blue vertex per connected component is necessary and sufficient to eventually color an entire graph blue. Therefore finding a minimum probabilistic zero forcing set is not an interesting problem. However, there are parameters related to probabilistic zero forcing that are of interest. 

One such parameter is expected propagation time, which is the focus of this paper. Suppose that $G$ is a connected graph with the vertices in $B \neq \emptyset$ colored blue and all other vertices white. The {\em probabilistic propagation time} of $B$, denoted by $\pt_{pzf}(G,B)$, is defined as the random variable equal to the number of the round in which the last white vertex turns blue when applying the probabilistic color change rule \cite{GH18-PZF}. 
For a connected graph $G$ and a set $B \subseteq V(G)$ of vertices, the {\em expected propagation time of $B$} is the expected value of the propagation time of $B$ \cite{GH18-PZF}, i.e.,
\[\ept(G,B) = \textbf{E}[\pt_{pzf}(G,B)].\]
The {\em expected propagation time} of a connected graph $G$ is the minimum of the expected propagation time of $B$ over all one-vertex sets $B$ of $G$ \cite{GH18-PZF}, i.e.,
\[\ept(G) = \min \{\ept(G, \{v\}): v \in V(G)\}.\]

The use of Markov chains for probabilistic zero forcing was introduced in \cite{KY13} and studied further in \cite{GH18-PZF}. If $M$ is the $s \times s$ Markov matrix where the first state is one blue vertex and the last state is all vertices blue, then \[\ept(G,B)=\sum_{r=1}^\infty r\lp M^r- M^{r-1}\rp_{1s}\] \cite{GH18-PZF}.
In Section \ref{sMarkov} we provide an exact method to calculate $\ept(G, B)$ and apply it to obtain a table of the expected propagation times of small graphs. We also prove that there exist arbitrarily large graphs for which adding an edge increases the expected propagation time, answering a question in \cite{GH18-PZF}. This section also includes a characterization of the Markov matrix for the complete graph $K_n$ on $n$ vertices and data on its expected propagation time for various $n$. We also provide constructions of Markov matrices for complete bipartite graphs $K_{m, n}$, $n$-sun graphs, and $n$-comb graphs, as well as data on their behavior. 

In Section \ref{sBounds} we prove that $\ept(K_n) = \Theta(\log \log n)$, improving the upper bound given in \cite{GH18-PZF}, and $\ept(K_{c,n}) = \Theta(\log n)$, where $c \geq 1$ is a fixed integer. 
We prove that $\ept(G) = O(n)$ for any connected graph $G$ on $n$ vertices. Furthermore, we prove a $\Theta(\log n)$ bound on the expected propagation time of graphs on $n$ vertices obtained by adding a universal vertex to a graph of bounded degree. 

We define some additional terms from graph theory and notation that we will use throughout the paper.  The \emph{order} of a graph is the number of vertices. The \emph{path} $P_n$ of order $n$ is a graph whose vertices can be listed in the order $v_1, \dots, v_n$ such that the edges of the graph are $\left\{v_i, v_{i+1}\right\}$ for $i = 1, \dots, n-1$. The \emph{cycle} $C_n$ of order $n$ is a graph whose vertices can be listed in the order $v_1, \dots, v_n$ such that the edges of the graph are $\left\{v_i, v_{i+1}\right\}$ for $i = 1, \dots, n-1$ and $\left\{v_1, v_n\right\}$. The \emph{complete graph} $K_n$ is the graph of order $n$ with all possible edges. The \emph{complete bipartite graph} $K_{m, n}$ is the graph of order $m+n$ whose vertices can be divided into two parts $u_1, \dots, u_m$ and $v_1, \dots, v_n$ such that the edges of the graph are $\left\{u_i, v_j \right\}$ for $1 \leq i \leq m$ and $1 \leq j \leq n$.   As a shorthand, we denote the edge $\left\{u, v \right\}$ as $uv$ (since the graphs in the paper are not directed, the same edge could be written as $v u$).  If $v$ is a vertex in $G$, then $G - v$ denotes the graph obtained from $G$ by removing the vertex $v$ and all edges that contain $v$. If $B$ is a set of blue vertices in $G$ and $v$ is a white vertex, we use $B \to v$ to denote that some vertex in $B$ forces $v$.



\section{Markov chains for probabilistic zero forcing}\label{sMarkov} 

In this section we introduce a method to compute expected propagation time exactly from the Markov transition matrix.  We then apply Markov chain methods to compute expected propagation time of small graphs and families of graphs.  We also answer the question of whether  adding an edge can raise expected propagation time (cf.  \cite[Question 2.16]{GH18-PZF}).

Let $G$ be a graph and $ B\subset  V(G)$ be nonempty.  A {\em simple  state for $B$} is a coloring of the vertices   that can be reached by starting with exactly the vertices in $B$ blue, and then applying the probabilistic color change rule iteratively.   
We normally combine simple states that behave analogously into one {\em state for $B$}. For example, in $K_n$ starting with one blue vertex, we use $n$ states, with state $k$ being the condition of having $k$ blue vertices.   
In most graphs, it matters which vertices are blue, and this is reflected by distinguishing states with the same number of blue vertices but different behavior.  

An  {\em ordered state list for $B$}, denoted by  $\osl=(S_1,\dots, S_s)$, is an ordered list of all  states  for $B$ in which $S_1$ is the initial state (where exactly the vertices in $B$ are blue), $S_s$ is the final state (where all vertices are blue), and the states $S_k,k=2,\dots,s-1$ are in some chosen order. A graph $G$ and an ordered  state list $\osl$ determine the Markov transition matrix for the process, which is denoted by $M(G,\osl)$.   Reordering the states $S_2,\dots,S_{s-1}$ results in a Markov transition matrix that is obtained by a permutation similarity of $M(G,\osl)$.
We use $|S_k|$ to denote the number of   blue vertices in state $S_k$, and  say $\osl$  is {\em properly ordered} if     $|S_i|<|S_j|$ implies  $i<j$.   

\begin{prop}\label{p:Markov-triangle}. 
Let $G$ be a graph and let $ B\subset V(G)$ be nonempty.   Let $\osl$ be an ordered state list for $B$ and let $M(G,\osl)=[m_{ij}]$.  Then $\spec(M(G,\osl))=\{m_{kk}:k=1,\dots,s\}$, every eigenvalue is a real number in the interval $[0,1]$, and $1$ is a simple eigenvalue of $M(G,\osl)$. If $\osl$ 
is a properly ordered state list for $B$,  then $M(G,\osl)$ is upper triangular.
\end{prop}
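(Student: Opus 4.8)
The plan is to prove the triangular structure first, since the claims about the spectrum, the location of the eigenvalues, and the simplicity of $1$ will all follow from it together with elementary facts about probabilities and permutation similarity. The single structural fact driving everything is that the probabilistic color change rule only recolors white vertices blue and never the reverse, so along any realization of the process the set of blue vertices can only grow. Consequently, if the one-round transition probability $m_{ij}$ from $S_i$ to $S_j$ is positive, then the blue set of $S_j$ contains the blue set of $S_i$; in particular $|S_i|\le|S_j|$.

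With this in hand I would establish upper triangularity for a properly ordered $\osl$ by showing $m_{ij}=0$ whenever $i>j$. Suppose to the contrary that $i>j$ but $m_{ij}>0$, so that $|S_i|\le|S_j|$. If $|S_i|<|S_j|$, then proper ordering forces $i<j$, a contradiction. If instead $|S_i|=|S_j|$, then a one-round transition from a coloring in $S_i$ to a coloring in $S_j$ neither gains nor loses blue vertices; since the blue set only grows and its cardinality is unchanged, the blue set itself is unchanged, hence the coloring is unchanged. As each coloring belongs to a unique state, this yields $S_i=S_j$, i.e.\ $i=j$, again a contradiction. Therefore $M(G,\osl)$ is upper triangular. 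The main point to get right here is the equal-cardinality case: states are classes of analogously behaving colorings rather than single colorings, so the argument must be carried out at the level of the underlying blue sets.

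From triangularity the remaining claims are short. For a properly ordered $\osl$ the eigenvalues of the upper triangular matrix $M(G,\osl)$ are exactly its diagonal entries, giving $\spec(M(G,\osl))=\{m_{kk}:k=1,\dots,s\}$. An arbitrary ordered state list differs from a properly ordered one only by reordering $S_2,\dots,S_{s-1}$, hence by a permutation similarity; this preserves the spectrum and merely permutes the diagonal as a multiset, so the identity $\spec(M(G,\osl))=\{m_{kk}\}$ holds for every ordered state list. Moreover each diagonal entry $m_{kk}$ is the probability of remaining in $S_k$ for one round, hence a real number in $[0,1]$, so every eigenvalue is real and lies in $[0,1]$.

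Finally, for the simplicity of $1$ I would count the diagonal entries equal to $1$, since for a triangular matrix this count is the algebraic multiplicity and it is invariant under the permutation similarity above. The all-blue state $S_s$ is absorbing, so $m_{ss}=1$. For $k<s$ the state $S_k$ has a white vertex, and since the all-blue state is reachable the current blue set meets every component; hence within the relevant component there is a blue vertex $u$ adjacent to a white vertex $w$, and because $u$ itself is blue the forcing probability $|N[u]\cap B|/\deg u$ (with $B$ the current set of blue vertices) is positive, giving a positive chance of forcing $w$ and leaving $S_k$, so $m_{kk}<1$. Thus $1$ occurs exactly once on the diagonal and is a simple eigenvalue. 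The only delicate point in this last step is guaranteeing the blue-to-white edge, which is precisely where connectivity of $G$ (equivalently, reachability of the all-blue state from $B$) is used.
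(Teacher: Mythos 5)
Your proof is correct and follows essentially the same route as the paper's: establish upper triangularity for a properly ordered state list, read the spectrum off the diagonal, observe that the diagonal entries are probabilities with $m_{ss}=1$ and $m_{kk}<1$ for $k<s$, and extend to arbitrary orderings via permutation similarity. The only difference is that you spell out two steps the paper leaves implicit (the equal-cardinality case in the triangularity argument and the justification that $m_{kk}<1$ via a blue--white edge), which is a welcome but not substantively different elaboration.
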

\bpf   Assume first that $\osl$ is properly ordered. If $i\ne j$ and it is possible to go from $S_i$ to $S_j$ in one round, then $|S_i|<|S_j|$ so $i<j$. Thus $M(G,\osl)$ is an upper triangular matrix and the eigenvalues are the diagonal entries.  The  probability $m_{kk}$ of remaining in state $S_k$ is less than one for $k<s$, is equal to one for $k=s$, and all $m_{kk}$ are nonnegative.  Thus, one is a simple eigenvalue and  is the spectral radius  of $M(G,\osl)$.  

Note that a permutation similarity does not change the eigenvalues of $M(G,\osl)$ or the (unordered) multiset of diagonal entries  (although the order of the diagonal entries may change).  Thus the  statements about the spectrum are true without the assumption that $\osl$ is properly ordered. 
\epf  

\begin{thm}\label{t:ept-M} Suppose that $G$ is a graph,  $ B\subset V(G)$ is nonempty,  $\osl$ is an ordered state list for $B$ with $s$ states, and  $M=M(G,\osl)$. Then \[\ept(G,B)=((M-\bone{\be_s}^T-I)^{-1})_{1s}+1,\]
where $\bone=[1,\dots,1]^T$ and $\be_s=[0,\dots,0,1]^T$. 
\end{thm}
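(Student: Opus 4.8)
The plan is to start from the series $\ept(G,B)=\sum_{r=1}^\infty r(M^r-M^{r-1})_{1s}$ quoted from \cite{GH18-PZF} and collapse it into a single resolvent by exploiting an idempotent matrix hidden in the formula. Set $J=\bone\be_s^T$, the matrix whose $s$th column is all ones and whose other entries vanish. Because every row of a transition matrix is a probability distribution we have $M\bone=\bone$, and because the final state $S_s$ is absorbing the $s$th row of $M$ equals $\be_s^T$ (so $\be_s^T M=\be_s^T$); Proposition~\ref{p:Markov-triangle} already records $m_{ss}=1$. From these I would extract the three relations $MJ=JM=J$ and $J^2=J$ (whence also $M^kJ=J$ for every $k\ge 0$), which are the engine of the whole argument.

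Next I would reduce to the properly ordered case. The left-hand side $\ept(G,B)$ is intrinsic to $(G,B)$, and the right-hand side $((M-J-I)^{-1})_{1s}+1$ is unchanged when $S_2,\dots,S_{s-1}$ are permuted, since such a reordering is a permutation similarity fixing the indices $1$ and $s$ (see the discussion before Proposition~\ref{p:Markov-triangle}). Hence I may assume $\osl$ is properly ordered, so by Proposition~\ref{p:Markov-triangle} the matrix $M$ is upper triangular with diagonal entries $m_{11},\dots,m_{s-1,s-1}\in[0,1)$ and $m_{ss}=1$. Then $M-J$ is upper triangular with diagonal $(m_{11},\dots,m_{s-1,s-1},0)$, so its spectral radius is strictly less than $1$; consequently $I-(M-J)=I-M+J$ is invertible and $\sum_{r=0}^\infty (M-J)^r=(I-M+J)^{-1}$ converges.

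The key step is the telescoping identity $(M-J)^r=M^r-J$ for all $r\ge 1$, which I would prove by induction: using $MJ=JM=J$, $J^2=J$, and $M^{r-1}J=J$, one gets $(M-J)^r=(M^{r-1}-J)(M-J)=M^r-M^{r-1}J-JM+J^2=M^r-J$. Combined with the elementary tail-sum form $\ept(G,B)=\sum_{r=0}^\infty\big(1-(M^r)_{1s}\big)$ (obtained from the quoted series by summation by parts, the required tail estimate $r(1-(M^r)_{1s})\to0$ being immediate since $1-(M^r)_{1s}=-((M-J)^r)_{1s}$ decays geometrically), and reading off the $(1,s)$ entry with $J_{1s}=1$ and $(I)_{1s}=0$, this yields
\[\ept(G,B)=\sum_{r=0}^\infty\big(1-(M^r)_{1s}\big)=1+\sum_{r=1}^\infty\big(1-(M^r)_{1s}\big)=1-\Big(\sum_{r=1}^\infty (M^r-J)\Big)_{1s}.\]
Since $\sum_{r=1}^\infty(M^r-J)=\sum_{r=1}^\infty(M-J)^r=(I-M+J)^{-1}-I$, the last expression equals $1-\big((I-M+J)^{-1}\big)_{1s}$.

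Finally I would observe that $I-M+J=-(M-\bone\be_s^T-I)$, so $(I-M+J)^{-1}=-(M-\bone\be_s^T-I)^{-1}$, and the sign flips the $(1,s)$ entry to give $\ept(G,B)=1+\big((M-\bone\be_s^T-I)^{-1}\big)_{1s}$, as claimed. I expect the main obstacle to be the matrix-series step itself: one has to recognize the idempotent $J$ and verify $MJ=JM=J^2=J$ so that $(M-J)^r=M^r-J$ telescopes the expected-value series into one geometric series; once that is in hand, the convergence bookkeeping and the final algebra are routine given Proposition~\ref{p:Markov-triangle}.
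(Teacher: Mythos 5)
Your proof is correct and follows essentially the same route as the paper's: both hinge on the rank-one correction $J=\bone{\be_s}^T$, the key identity $(M-J)^r=M^r-J$ (which the paper states as $M^k=\tilde M^k+\bone{\be_s}^T$ for $\tilde M=M-\bone{\be_s}^T$), and collapsing the resulting geometric series into the resolvent $(M-\bone{\be_s}^T-I)^{-1}$. The only cosmetic differences are that you first pass to the tail-sum form of the expectation via Abel summation, where the paper telescopes the matrix partial sums $\sum_{r=1}^{\ell}r(M^r-M^{r-1})$ directly, and that you justify the spectral radius bound on $M-J$ by permutation-reducing to the properly ordered (upper triangular) case rather than by the paper's deflation argument.
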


\begin{proof}
Define $\tilde M=M-\bone{\be_s}^T$. Since $M\bone=\bone$ and ${\be_s}^T M={\be_s}^T$, ${\tilde M}\bone=\bzero$ and $\be_s^T {\tilde M}=\bzero^T$.  An inductive argument shows that $M^k={{\tilde M}}^k+\bone{\be_s}^T$ for $k\geq 1$.  Furthermore, the spectrum of ${\tilde M}$ is obtained from $\spec(M)$ by replacing eigenvalue 1 with 0 (subtracting $\bone{\be_s}^T$ has the effect of deflating $M$ on eigenvalue 1, as is done in the proof of \cite[Theorem 8.2.7]{HJ13}).  Recall that 
$\ept(G,B)=\sum_{r=1}^\infty r\lp M^r- M^{r-1}\rp_{1s}$ \cite{GH18-PZF}, so we consider $\sum_{r=1}^{\ell} r\lp M^r- M^{r-1}\rp$ as $\ell\to\infty$.
\bea
\sum_{r=1}^{\ell} r(M^r - M^{r-1})
&=&{{\tilde M}}+\bone{\be_s}^T - I + \sum_{r=2}^{\ell} r\lp{{\tilde M}}^r +\bone{\be_s}^T- \lp {\tilde M}^{r-1}+\bone{\be_s}^T\rp\rp\\
&=&\bone{\be_s}^T + \sum_{r=1}^{\ell} r {{\tilde M}}^r -\sum_{r=1}^{\ell-1} r {{\tilde M}}^{r}- \sum_{r=1}^{\ell-1}{{\tilde M}}^{r}-I\\
&=&\bone{\be_s}^T + \ell{{\tilde M}}^{\ell} - \lp I+{\tilde M}+\dots+{{\tilde M}}^{\ell-1}\rp\\
&=&\bone{\be_s}^T + \ell{{\tilde M}}^{\ell} - ({\tilde M}-I)^{-1}({\tilde M}^{\ell}-I).
\eea
Since the spectral radius is less than one,  $\ell{{\tilde M}}^{\ell} \to 0$ and $({\tilde M}-I)^{-1}{{\tilde M}}^{\ell} \to 0$ as $\ell \to \infty$.  Thus,
\[\lim_{\ell\to\infty} \sum_{r=1}^{\ell} r\lp M^r - M^{r-1}\rp=({\tilde M}-I)^{-1} + \bone{\be_s}^T\]
and
\[
\ept(G,B)=\lp(M-\bone{\be_s}^T-I)^{-1} \rp_{1s}+1.
\]
\end{proof}


\subsection{Small graphs}\label{ss:small}

We use Markov matrices and Theorem \ref{t:ept-M} to determine the expected propagation times for all connected graphs of order at most seven.  Expected propagation times for connected graphs of order at most three were known previously and are summarized in Table \ref{tab:ept_ord3-}; when not immediate, a source is given.   

\begin{table}[h!]
\begin{center}
\begin{tabular}[h]{|c|c|c|}
\hline
$G$ &  Source & $\ept(G)$  \\ \hline
$K_1$ &  & 0 \\ \hline 
$K_2$ &   & 1  \\ \hline 
$P_3$ & \cite{GH18-PZF} & 2  \\ \hline 
$K_3$ & \cite{GH18-PZF} & 2  \\ \hline 
\end{tabular}\vspace{3pt}
\caption{\label{tab:ept_ord3-}Values of $\ept(G)$ for connected graphs of order at most three.}
\end{center}
\end{table}\vspace{-10pt}

Table \ref{tab:ept_ord4} presents the  expected propagation time for each connected graph of order four, including both the exact (rational) value and its decimal approximation,   together with the ordered state list and Markov matrix for an initial vertex that realizes the expected propagation time of the graph.    Data for connected graphs of orders 5, 6, and 7 (omitting the matrices and the exact values) 
can be found in   Appendix 1 \cite{eptsmall} (available online).

\begin{table}[ht!]
\begin{center}
\renewcommand{\arraystretch}{1.07}
\begin{tabular}[h]{|c|c|c|c|}
\hline
$G$ & Labeling & Markov matrix and states &   $\ept(G)$  \\ \hline
$P_4$ & \includegraphics[width=0.12\textwidth]{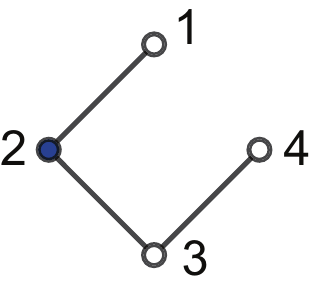}  & 
${\lb\begin{array}{ccccc}
 \frac{1}{4} & \frac{1}{4} & \frac{1}{4} & \frac{1}{4} & 0 \\
 0 & 0 & 0 & 0 & 1 \\
 0 & 0 & 0 & 1 & 0 \\
 0 & 0 & 0 & 0 & 1 \\
 0 & 0 & 0 & 0 & 1 \\
\end{array}
\rb\atop
\{2\}, \{1,2\}, \{2,3\}, \{1,2,3\},  \{1,2,3,4\}}$ & $2\frac 2 3 \approx 2.66667$    \\[1pt] \hline
$K_{1,3}$ & \includegraphics[width=0.12\textwidth]{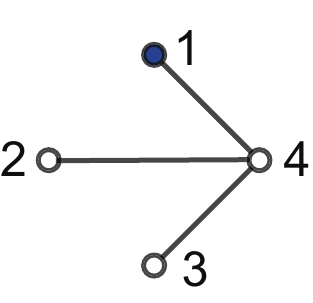} & 
${\lb\begin{array}{cccc}
 0 & 1 & 0 & 0 \\
 0 & \frac{1}{9} & \frac{4}{9} & \frac{4}{9} \\
 0 & 0 & 0 & 1 \\
 0 & 0 & 0 & 1 
\end{array}
\rb
\atop{\rm number~of~blue~=~1,~2,~3,~4}}$ & $2\frac{5}{8} \approx 2.625$    \\[2pt] \hline 
paw  & \includegraphics[width=0.12\textwidth]{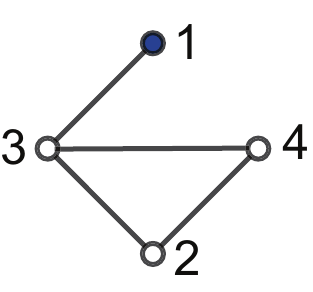} & 
${\lb\begin{array}{cccc}
 0 & 1 & 0 & 0 \\
 0 & \frac{1}{9} & \frac{4}{9} & \frac{4}{9} \\
 0 & 0 & 0 & 1 \\
 0 & 0 & 0 & 1 \\
\end{array}
\rb\atop {\rm number~of~blue~=~1,~2,~3,~4}}$ & $2\frac{5}{8}\approx 2.625$    \\[1pt] \hline 
$C_4$ & \includegraphics[width=0.12\textwidth]{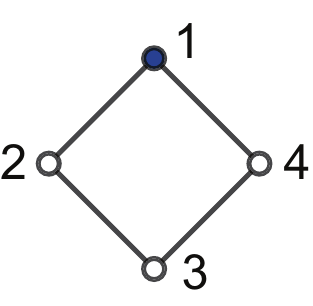} &  
${\lb\begin{array}{cccc}
 \frac{1}{4} & \frac{1}{2} & \frac{1}{4} & 0 \\
 0 & 0 & 0 & 1 \\
 0 & 0 & 0 & 1 \\
 0 & 0 & 0 & 1 \\
\end{array}
\rb\atop {\rm number~of~blue~=~1,~2,~3,~4}}$ &  $2\frac 1 3\approx 2.33333$     \\[3pt] \hline 
diamond  & \includegraphics[width=0.12\textwidth]{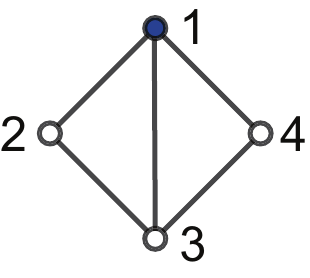} & 
${\lb\begin{array}{ccccc}
\frac{8}{27} & \frac{8}{27} & \frac{4}{27} & \frac{2}{9}
& \frac{1}{27} \\
0 & 0 & 0 & \frac{1}{3} & \frac{2}{3} \\
0 & 0 & \frac{1}{81} & \frac{16}{81} & \frac{64}{81} \\
0 & 0 & 0 & 0 & 1 \\
0 & 0 & 0 & 0 & 1
\end{array}\rb
\atop \{1\},\ \lp\{1,2\} \, {\rm or}\, \{1,4\}\rp,\, \{1,3\},  \ {\rm 3\, blue,~4\, blue}}$ &  $2\frac{631}{1140} \approx 2.55351$    \\[2pt] \hline 
$K_4$ & \includegraphics[width=0.12\textwidth]{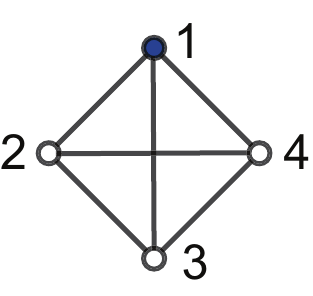} & 
${\lb
\begin{array}{cccc}
 \frac{8}{27} & \frac{4}{9} & \frac{2}{9} & \frac{1}{27} \\
 0 & \frac{1}{81} & \frac{16}{81} & \frac{64}{81} \\
 0 & 0 & 0 & 1 \\
 0 & 0 & 0 & 1 \\
\end{array}
\rb
\atop {\rm number~of~blue~=~1,~2,~3,~4}}$& $2\frac{191}{380} \approx 2.50263$     \\[2pt] \hline 
\end{tabular}\vspace{4pt}
\caption{\label{tab:ept_ord4}Values of $\ept(G)$ for connected graphs of order four.  }\vspace{-12pt}
\end{center}
\end{table}

 Observe that the expected propagation time of the diamond is higher than that of the $4$-cycle, even though the diamond can be obtained by adding an edge to $C_4$.  This demonstrates that adding an edge can raise expected propagation time, thereby answering Question 2.16 in  \cite{GH18-PZF}.
This idea is generalized in Theorem \ref{add-edge-inc-ept} to construct an infinite family of graphs for which adding an edge increases expected propagation time. 
The {\em tadpole graph} $T_{4,m}$ is constructed from   $C_4$ with vertices $p_1,c_2,c_3,c_4$  labeled cyclically and $P_{m}$  with vertices $p_1,\dots,p_{m}$ labeled in path order as $T_{4,m}=C_4\cup P_{m}$. Form $T'_{4,m}$   by adding the edge $c_2c_4$ to  $T_{4,m}$.  See Figure \ref{f:L45}.
\begin{figure}[h!]\begin{center}
\includegraphics[width=0.3\textwidth]{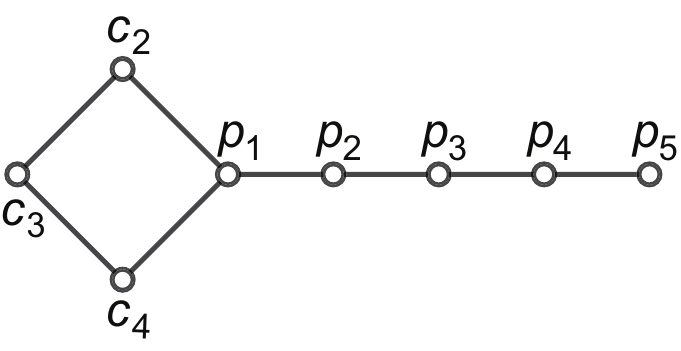}\qquad\includegraphics[width=0.3\textwidth]{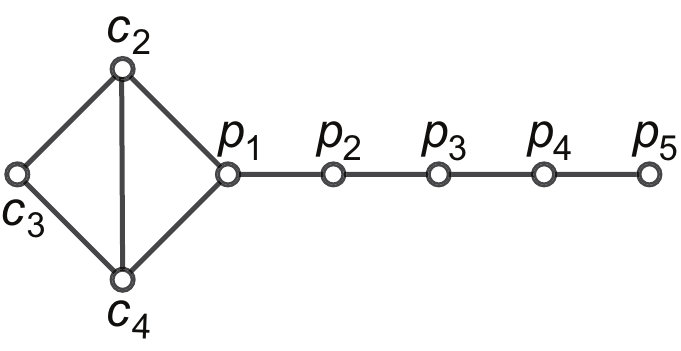}\vspace{-8pt}
\caption{\label{f:L45} The graphs $T_{4,5}$ and $T'_{4,5}$}\vspace{-8pt}
\end{center}\end{figure}

\begin{thm}\label{add-edge-inc-ept}
For infinitely many positive integers $n$, there exist graphs on $n$ vertices such that adding an edge strictly increases the expected propagation time.  Specifically, $\ept(T_{4,m})= \frac {m-1} 2+\frac{451}{216}=\frac {m-1}2 + \frac{1353}{648}$ and $\ept(T'_{4,m})=\frac {m-1}2 + \frac{1429}{648}$ when $m$ is odd, and $\ept(T_{4,m})=  \frac m 2+\frac{3331}{1944}=  \frac m 2+\frac{9993}{5832}$ and $\ept(T'_{4,m})=  \frac m 2+\frac{10357}{5832}$ when $m$ is even.
\end{thm}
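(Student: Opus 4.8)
The plan is to exploit the fact that, away from the attached cycle, probabilistic zero forcing on $T_{4,m}$ and $T'_{4,m}$ is essentially deterministic, so the only genuinely random behavior is a short ``launch'' at the initial vertex and the ``cleanup'' of the cycle (respectively diamond). First I would record the elementary computation that if $p_{i-1},p_i$ are blue and $p_{i+1}$ is white at a degree-two path vertex $p_i$, then $\pr(p_i\to p_{i+1})=\frac{|N[p_i]\cap B|}{\deg p_i}=\frac{2}{2}=1$; hence once two consecutive path vertices are blue, the blue front advances exactly one vertex per round deterministically. Consequently, starting from a single interior vertex $p_j$ (so $\deg p_j=2$), the process splits into two fronts marching toward $p_m$ and toward $p_1$, each at unit speed, after an initial launch in which $p_j$ attempts to color its two neighbors.

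Next I would isolate the randomness. Let $\tau$ be the first round in which $p_j$ forces at least one neighbor; since each neighbor is forced independently with probability $\tfrac12$ while only $p_j$ is blue, $\tau$ is geometric with success probability $\tfrac34$ and $\e[\tau]=\tfrac43$. Conditioned on $\tau$, with probability $\tfrac13$ each the pair of launch rounds $(\sigma_+,\sigma_-)$ (toward $p_m$, toward $p_1$) equals $(\tau,\tau)$, $(\tau,\tau+1)$, or $(\tau+1,\tau)$, because once one neighbor is blue the other is forced with probability $1$ on the next round. Writing $a=j-1$ and $b=m-j$ for the distances to $p_1$ and $p_m$, the front toward $p_m$ colors $p_m$ in round $\sigma_++b-1$, while the front toward $p_1$ colors $p_1$ in round $\sigma_-+a-1$ and then the cycle (respectively diamond) cleanup takes a further $D$ (respectively $D'$) rounds, with $D,D'$ independent of the launch. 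Since $p_m$ and the far cycle vertex $c_3$ are the last vertices to turn blue on their respective sides, $\pt_{pzf}=\max(\sigma_++b-1,\ \sigma_-+a-1+D)$, so $\ept(T_{4,m},\{p_j\})=\e[\max(\sigma_++b-1,\ \sigma_-+a-1+D)]$.

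Then I would compute the two ingredients as finite Markov chains. For the cleanup I would set up the small chains on the cycle and on the diamond, rooted at a blue $p_1$ with its adjacent path vertex blue, and extract not merely $\e[D],\e[D']$ but the full laws of $D$ and $D'$ (geometric-type distributions on $\{2,3,\dots\}$), either directly or via Theorem~\ref{t:ept-M} applied to these sub-chains. Factoring out $\tau$ and writing $d=b-a=m-2j+1$ gives
\[\ept(T_{4,m},\{p_j\})=\frac{m-3-d}{2}+\e[\tau]+\tfrac13\big(\e[\max(d,D)]+\e[\max(d,1+D)]+\e[\max(1+d,D)]\big),\]
and the same identity with $D'$ for $T'_{4,m}$. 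Finally I would minimize over admissible offsets: since $a+b=m-1$, one has $d\equiv m-1\pmod 2$, so $d$ is even when $m$ is odd and odd when $m$ is even, and a short comparison of consecutive values shows the minimum is attained at $d=2$ (odd $m$) and $d=3$ (even $m$) for both graphs. Substituting the explicit laws and evaluating the truncated expectations $\e[\max(k,D)]$ yields $\tfrac{451}{216},\tfrac{3331}{1944}$ for $T_{4,m}$ and $\tfrac{1429}{648},\tfrac{10357}{5832}$ for $T'_{4,m}$; subtracting gives a strictly positive, parity-dependent increase, proving that adding $c_2c_4$ raises expected propagation time.

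I expect the main obstacle to be twofold. First, the constants are governed by the \emph{maximum} of the two completion times, not by the cycle side alone, so one must carry the \emph{entire} distribution of the cleanup time through the truncated expectations $\e[\max(k,\cdot)]$: replacing $D$ by its mean gives the wrong (larger) answer, and this is exactly why the cycle-to-diamond increase differs between the parities. Second, one must establish global optimality of the chosen initial vertex. Every interior start yields two unit-speed fronts and hence linear coefficient $\tfrac12$, so it remains to check both that the balanced offset beats its neighbors and that non-interior starts (at the junction $p_1$, inside the cycle, or at a path endpoint) are worse because they leave a single front to traverse the long path, giving linear coefficient $1$; this is immediate for large $m$ but needs care at the small boundary values where the optimal $p_j$ coincides with or abuts $p_1$.
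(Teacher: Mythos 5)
Your proposal is correct and follows essentially the same route as the paper's proof: the identical decomposition into a geometric launch at the starting vertex (mean $\frac{4}{3}$, with three equally likely outcomes for which neighbors turn blue first), deterministic unit-speed path fronts, and a cycle/diamond cleanup chain rooted at $\{p_1,p_2\}$ with mean $\frac{17}{8}$ (resp.\ $\frac{55}{24}$) --- your truncated expectations $\e[\max(k,D)]$ encode exactly the paper's correction events $E_0,E_1$ (which are precisely $\{D=2\}$ and $\{D=3\}$), and your optimal offsets $d=2$, $d=3$ are the paper's starting vertices $p_k$, $p_{k-1}$. The one obstacle you flag --- ruling out starts at $p_1$, on the cycle, or at $p_m$ when the eccentricity bound is tight (e.g., $m=5$, where $\operatorname{ecc}(p_1)=k+2 < k+\frac{451}{216}$ fails to decide) --- is real, and the paper closes it with a one-line refinement you would also need: the first round fails with probability at least $\frac14$ unless the start is $p_m$, so $\ept(G,\{v\})\geq \operatorname{ecc}(v)+\frac14$.
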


\begin{proof}
Suppose that $m \geq 5$. For ease of exposition, we assume that the path is horizontal and to the right of the cycle in $T_{4,m}$ and $T'_{4,m}$, as in Figure \ref{f:L45}. 
First we note that $\ept(T_{4, 2},\left\{p_{1}, p_{2}\right\}) = \frac{17}{8}$, while $\ept(T'_{4, 2},\left\{p_{1}, p_{2}\right\}) = \frac{55}{24}$ (this can be verified by constructing Markov matrices and applying Theorem \ref{t:ept-M}). We define the events $E_0$ and $E_1$ as follows: $E_0$ is the event that after the first force has occurred, in every round in which a non-deterministic force is attempted  there is a successful non-deterministic force.  $E_1$ is the event that after the first force has occurred, in every round but one in which a non-deterministic force is attempted  there is a successful non-deterministic force.  We break the proof into two cases depending on the parity of $m$. 

\noi\underline{Suppose that $m = 2k$ for some positive integer $k$:} \\
\tab First we show that  $\ept(T_{4, m}, \left\{p_{k}\right\}) = \frac{4}{3} + \frac{2}{3}(k-2)+\frac{1}{3}(k-1) + \frac{17}{8} = k+\frac{43}{24}=k+ \frac{10449}{5832}$ and $\ept(T'_{4, m}, \left\{p_{k}\right\}) = \frac{4}{3} + \frac{2}{3}(k-2)+\frac{1}{3}(k-1) + \frac{55}{24} = k+\frac{47}{24}=k+\frac{11421}{5832}$.  In each case the stated value is the expected time for the vertices to the left of $p_k$ to turn blue, consisting of the expected time for the first force, the time after that to deterministically force $p_1$, and  $\ept(T_{4,2},\{p_1,p_2\})$ (respectively, $\ept(T'_{4,2},\{p_1,p_2\})$). The vertices on the right can be ignored because once the first force happens, the time for the vertices to the right of $p_{k}$ to turn blue  is less than or equal to the least possible time for the last vertex on the left of $p_{k}$ to turn blue.  \\   
\tab 
Any vertex other than $p_{k}$ and $p_{k-1}$ has a vertex with distance at least $k+2$ from it in both $T_{4, m}$ and $T'_{4,m}$, which exceeds both $\ept(T_{4, m}, \left\{p_{k}\right\})$ and $\ept(T'_{4, m}, \left\{p_{k}\right\})$, so it suffices to compute $\ept(T_{4, m}, \left\{p_{k-1}\right\})$ and $\ept(T'_{4, m}, \left\{p_{k-1}\right\})$. We split into three cases depending on which vertices are forced in the round where the first force occurs, each of which has probability $\frac{1}{3}$. \\
\tab For the first case, suppose that only $p_{k}$, i.e., the vertex to the right of $p_{k-1}$ gets colored blue in the round with the first force. Then the propagation time for the vertices to the right of $p_{k-1}$ is at most the propagation time for the vertices to the left of $p_{k-1}$, so the expected propagation time in this case is $\frac{4}{3} + (k-2) + \frac{17}{8}= k+\frac{35}{24}$ for $T_{4, m}$ and $\frac{4}{3} + (k-2) + \frac{55}{24}= k+\frac{39}{24}$ for $T'_{4, m}$.\\
\tab For the second case, suppose that both $p_{k}$ and $p_{k-2}$ get colored blue on the first force. Then the propagation time for the vertices to the right of $p_{k-1}$ is at most the propagation time for the vertices to the left of $p_{k-1}$, unless $E_0$ occurs, in which case the propagation time for the vertices to the left of $p_{k-1}$ is one less than the propagation time for the vertices to the right of $p_{k-1}$. Since $\pr(E_0)$ is $\frac{8}{9}$ for $T_{4, m}$ and $\frac{4}{9}+\frac{4}{9} \cdot \frac{2}{3} = \frac{20}{27}$ for $T'_{4, m}$, the expected propagation time in this case is  $\frac{4}{3} + (k-3) + \frac{17}{8} + \frac{8}{9}=k+\frac{291}{216}$ for $T_{4, m}$ and $\frac{4}{3} + (k-3) + \frac{55}{24}+\frac{20}{27}=k+\frac{295}{216}$ for $T'_{4, m}$.\\
\tab For the third case, suppose that only $p_{k-2}$, i.e., the vertex to the left of $p_{k-1}$, gets colored blue in the round with the first force. Then the propagation time for the vertices to the right of $p_{k-1}$ is at most the propagation time for the vertices to the left of $p_{k-1}$, unless $E_0$ or $E_1$ occurs, in which case the propagation time for the vertices to the left of $p_{k-1}$ is two or one less than the propagation time for the vertices to the right of $p_{k-1}$. In both $T_{4, m}$ and $T'_{4, m}$, $\pr(E_0)$ is the same as in the last paragraph. Moreover $\pr(E_1)$ is $\frac{1}{9} \cdot \frac{8}{9}$ for $T_{4, m}$ and $\frac{1}{9}(\frac{20}{27})+\frac{4}{9} \cdot \frac{1}{3}$ for $T'_{4, m}$. Thus the expected propagation time in this case is $\frac{4}{3} + (k-3) + \frac{17}{8} + \frac{8}{9} \cdot 2 + (\frac{1}{9} \cdot \frac{8}{9}) \cdot 1= k + \frac{1513}{648}= k + \frac{13617}{5832}$ for $T_{4, m}$ and $\frac{4}{3} + (k-3) + \frac{55}{24}+\frac{20}{27} \cdot 2 + (\frac{1}{9}(\frac{20}{27})+\frac{4}{9} \cdot \frac{1}{3})\cdot 1= k + \frac{13629}{5832}$ for $T'_{4, m}$.\\
\tab Observe that in each of the three cases, the expected propagation time for $T_{4, m}$ is less than the expected propagation time for $T'_{4, m}$.  We determine $\ept(T_{4, m}, \left\{p_{k-1}\right\})$ and $\ept(T'_{4, m}, \left\{p_{k-1}\right\})$ by averaging over the cases: $\ept(T_{4, m}, \left\{p_{k-1}\right\}) = k + \frac{3331}{1944}= k + \frac{9993}{5832}$ and $\ept(T'_{4, m}, \left\{p_{k-1}\right\}) =k + \frac{10357}{5832}$. 
 Therefore $\ept(T_{4, m}) =  k + \frac{3331}{1944}$ and $\ept(T'_{4, m}) = k + \frac{10357}{5832}$.

\noi\underline{Suppose that $m = 2k+1$ for some positive integer $k$:} \\
\tab This proof is similar to the last proof. Again, we first calculate $\ept(T_{4, m}, \left\{p_{k}\right\})$ and $\ept(T'_{4, m}, \left\{p_{k}\right\})$. Like the proof for $m = 2k$ using $p_{k-1}$, we split the analysis into three cases depending on what happens in the round where the first force occurs. For both cases where the vertex to the right of $p_{k}$ gets colored blue in the round with the first force, the propagation time for the vertices to the right of $p_{k}$ is at most the propagation time for the vertices to the left of $p_{k}$. If both vertices adjacent to $p_{k}$ are colored on the first force, the expected propagation time is $\frac{4}{3} + (k-2) + \frac{17}{8}=k+\frac{35}{24}$ for $T_{4, m}$ and $\frac{4}{3} + (k-2) + \frac{55}{24}=k+\frac{39}{24}$ for $T'_{4, m}$.  If only the vertex to the right of $p_{k}$ gets colored on the first successful force, the expected propagation time is $\frac{4}{3} + (k-1) + \frac{17}{8}=k+\frac{59}{24}$ for $T_{4, m}$ and $\frac{4}{3} + (k-1) + \frac{55}{24}=k+\frac{63}{24}$ for $T'_{4, m}$.\\
\tab For the case where only the vertex to the left of $p_{k}$ gets colored on the first successful force, the propagation time for the vertices to the right of $p_{k}$ is at most the propagation time for the vertices to the left of $p_{k}$, unless $E_0$ occurs. Like the second case of the proof for $m = 2k$, $\pr(E_0)$ is $\frac{8}{9}$ for $T_{4, m}$ and $\frac{4}{9}+\frac{4}{9} \cdot \frac{2}{3} = \frac{20}{27}$ for $T'_{4, m}$. Thus the expected propagation time in this case is  $\frac{4}{3} + (k-2) + \frac{17}{8} + \frac{8}{9}=k+\frac{169}{72}=k+\frac{507}{216}$ for $T_{4, m}$ and $\frac{4}{3} + (k-2) + \frac{55}{24}+\frac{20}{27}=k+\frac{511}{216}$ for $T'_{4, m}$.\\
\tab Again, the expected propagation time for $T_{4, m}$ is less than the expected propagation time for $T'_{4, m}$  in each of the three cases, and we determine $\ept(T_{4, m}, \left\{p_{k-1}\right\})$ and $\ept(T'_{4, m}, \left\{p_{k-1}\right\})$ by averaging over the cases:  $\ept(T_{4, m}, \left\{p_{k}\right\}) = k + \frac{451}{216} < k+2.1$ and $\ept(T'_{4, m}, \left\{p_{k}\right\}) = k + \frac{1429}{648} < k+2.21$. Any vertex besides $p_{k}$ has a vertex with distance at least $k+2$ from it in both $T_{4, m}$ and $T'_{4,m}$, and the probability of failure on the first turn of the coloring process is at least $\frac{1}{4}$ except when $p_m$ is the initial blue vertex, so $\ept(T_{4, m}, \left\{v\right\}) \geq k+2.25$ and $\ept(T'_{4, m}, \left\{v\right\}) \geq k+2.25$ for any $v \neq p_{k}$. Thus  $\ept(T_{4, m}) = k + \frac{451}{216}= k + \frac{1353}{648}$ and $\ept(T'_{4, m}) = k + \frac{1429}{648}$.
\end{proof}

\subsection{The complete graph}

Let $K_n=(V,E)$ be the complete graph on $n$ vertices. Let $B$ be the set of currently blue vertices and let $b=|B| < n$. Consequently, the number of currently white vertices is equal to $n-b$.	For any $v\in B$ and $w\in V \setminus B$, $\pr(v\to w)=\frac{b}{n-1}$ and $\pr(v\not \to w)=1-\frac{b}{n-1}$.
At each given time step, for any given $w\in V \setminus B$, each $v\in B$ will independently attempt to force it. If at least one $v\in B$ is successful, then $w$ is forced. So for any $w\in V \setminus B$ and any integer $k$ such that $0\leq k\leq n-b$, $\pr(\forall v \in B, v\not \to w)=\left(1-\frac{b}{n-1}\right)^b$ and $\pr(B \to w)=1-\left(1-\frac{b}{n-1}\right)^b$. Thus for $b < n-1$,
\beq \label{p:Kn-kforce} \pr(\mbox{exactly $k$ white vertices are forced})={n-b \choose k} \left( 1-\left(\frac{n-1-b}{n-1}\right)^b \right)^k \left(\left(\frac{n-1-b}{n-1}\right)^b\right)^{n-b-k}.\eeq
For $b = n-1$, the process is deterministic (note that \eqref{p:Kn-kforce} remains valid if $0^0 = 1$). The next theorem follows from the previous statement and \eqref{p:Kn-kforce}.

\begin{thm}\label{t:M(K_n)}
	Let $\osl=(S_1,\dots, S_n)$ be the ordered state list where $S_k$ is the state of having $k$ blue vertices in $K_n$. The matrix $M :=M(K_n,\osl)$ has
	\begin{eqnarray*}
		m_{ij}=
		\begin{cases}
			{n-i \choose j-i} \left( 1-\left(\frac{n-1-i}{n-1}\right)^i \right)^{j-i} \left(\left(\frac{n-1-i}{n-1}\right)^i\right)^{n-j} & \mbox{if } i\le \min(n-2,j),\\
			1 & \mbox{if } j = n \text{ and } i = n-1 \text{ or } n,\\
			0 & \mbox{if } i>j \text{ or } i = j = n-1.
		\end{cases}
	\end{eqnarray*}
	Furthermore,  \[
	\spec(M)=\left\{0,1, \bigg(\frac{n-1-i}{n-1}\bigg)^{i(n-i)}:i\in\{1,\dots,n-2\}\right\}.
	\]
\end{thm}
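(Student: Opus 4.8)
The plan is to derive the entry formula directly from the transition dynamics already recorded in \eqref{p:Kn-kforce}, and then to read off the spectrum from the triangular structure guaranteed by Proposition \ref{p:Markov-triangle}. Since both the matrix and its eigenvalues have explicit closed forms, the argument is almost entirely bookkeeping: the one place that demands care is aligning the summation indices in \eqref{p:Kn-kforce} with the state labels $i,j$ and separating out the two boundary regimes $i=n-1$ and $i=n$.

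First I would justify the entries. In state $S_i$ there are exactly $i$ blue vertices, so a one-round transition into $S_j$ occurs precisely when exactly $j-i$ of the $n-i$ white vertices are forced. For $i\le n-2$ the process is genuinely probabilistic, and substituting $b=i$ and $k=j-i$ into \eqref{p:Kn-kforce} yields the first case, after noting that the failure exponent becomes $n-b-k=n-j$. The constraint $i\le j$ (so that $j-i\ge 0$) reflects that blue vertices never revert, which also forces $m_{ij}=0$ when $i>j$; this is the origin of the $\min(n-2,j)$ in the hypothesis. For the boundary, when $i=n-1$ there is a single white vertex and each blue vertex forces it with probability $\tfrac{n-1}{n-1}=1$, so it is colored with certainty: $m_{n-1,n}=1$ and $m_{n-1,n-1}=0$. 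When $i=n$ the state is absorbing, giving $m_{nn}=1$. These account for the second and third cases.

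For the spectrum I would invoke Proposition \ref{p:Markov-triangle}: because $\osl$ is properly ordered (higher index means more blue vertices), $M$ is upper triangular, so $\spec(M)$ equals the multiset of diagonal entries. Setting $j=i$ in the first-case formula collapses the binomial coefficient and the ``success'' factor to $1$, leaving $m_{ii}=\left(\left(\tfrac{n-1-i}{n-1}\right)^i\right)^{n-i}=\left(\tfrac{n-1-i}{n-1}\right)^{i(n-i)}$ for each $i\in\{1,\dots,n-2\}$. Combining these with the two boundary diagonal entries $m_{n-1,n-1}=0$ and $m_{nn}=1$ produces exactly the claimed set.

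I expect no genuine obstacle here: the argument is a direct specialization of \eqref{p:Kn-kforce} together with the triangularity already established in Proposition \ref{p:Markov-triangle}. The only pitfalls are clerical, namely correctly computing the failure exponent $n-j$, and handling $i=n-1$ through the deterministic force rather than attempting to apply the probabilistic formula (where one would instead need the convention $0^0=1$ to recover $m_{n-1,n}=1$).
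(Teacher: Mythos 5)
Your proposal is correct and takes essentially the same approach as the paper: the paper gives no separate proof, stating only that the theorem follows from the preceding discussion and \eqref{p:Kn-kforce}, i.e., exactly your specialization $b=i$, $k=j-i$ (with failure exponent $n-j$), the deterministic treatment of $b=n-1$, and reading the spectrum off the diagonal via Proposition \ref{p:Markov-triangle}. Your write-up is simply a fleshed-out version of that intended argument, with all boundary cases handled correctly.
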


Using Theorems \ref{t:ept-M} and \ref{t:M(K_n)}, we can obtain an exact (rational number) value for the expected propagation time of $K_n$.  However the rational values have rapidly growing numerators and denominators so in the next table we display the decimal equivalents. 

\begin{table}[h!]
	\begin{center}
		\begin{tabular}[h]{||c|r||c|r||c|r||c|r||c|r||}
			\hline
			$n$ & $\ept(K_n)$ & $n$ & $\ept(K_n)$ & $n$ & $\ept(K_n)$ & $n$ & $\ept(K_n)$ & $n$ & $\ept(K_n)$ \\
			\hline
			1 & 0. & 11  & 3.65014 & 21  & 4.05931 & 31  & 4.24949 & 41 & 4.36583\\
2 & 1. & 12  & 3.71241 & 22  & 4.08432 & 32  & 4.26335 & 42 & 4.3753\\
3 & 2. & 13 & 3.76715 & 23  & 4.1076 & 33  & 4.2766 & 43 & 4.38447\\
4 & 2.50263 & 14  & 3.81611 & 24  & 4.12933 & 34  & 4.2893 & 44 & 4.39334\\
5 & 2.8319 & 15  & 3.8604 & 25  & 4.14966 &  35  & 4.30149 & 45 & 4.40193\\
6 & 3.07164 &16  & 3.90079 & 26  & 4.16874  & 36  & 4.31321 & 46 & 4.41024\\
7 & 3.24769 &17  & 3.93782 &  27  & 4.18671 & 37  & 4.3245 & 47 & 4.4183\\
8 & 3.3829 &18  & 3.97188 &  28  & 4.20367 & 38  & 4.33539 & 48 & 4.42611\\
9 & 3.49035 &19  & 4.00331 &  29  & 4.21973 & 39  & 4.34589 & 49 & 4.43367 \\
10 & 3.57753 & 20  & 4.03238 &  30  & 4.23497 & 40  & 4.35603 & 50 & 4.44101\\ \hline\end{tabular}
\caption{\label{tab:ept(K_n)}Values of $\ept(K_n)$ for $n=1,\dots,50$.}
	\end{center}
\end{table}

\noi In Figure \ref{figdata}, we plot this data and the graph of $1.4 \log \log n+2.5$. In Theorem \ref{tkn}, we prove that $\ept(K_n) = \Theta(\log \log n)$.

\begin{figure}[h!]
\centering
\includegraphics[width=0.4\textwidth]{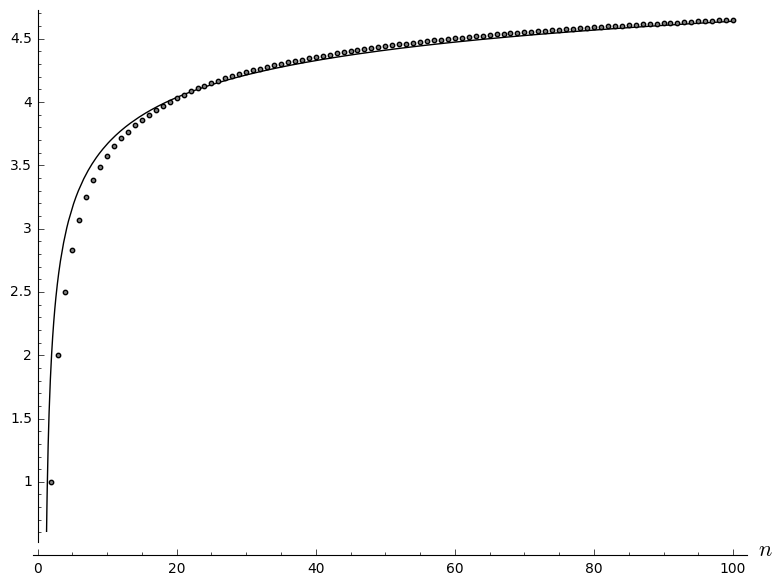}\vspace{-8pt}
\caption{\label{figdata} A plot of $\ept(K_n)$ and $1.4 \log \log n+2.5$.}
\end{figure}

\subsection{The complete bipartite graph}

Using a similar process, we can construct a Markov matrix for the complete bipartite graph $K_{m,n}$. Partition $K_{m,n}$ into its partite vertex sets $R$ and $R'$. We denote each state $(a,b)$, where $a$ and $b$ denote the number of blue vertices in $R$ and $R'$, respectively. In this state, $a$ blue vertices independently attempt to force $n-b \geq 1$ white vertices, each with probability $\frac{b+1}{n}$, and $b$ blue vertices independently attempt to force $m-a \geq 1$ white vertices, each with probability $\frac{a+1}{m}$. 

\begin{prop}
	Given initial state $(a,b)$, the probability of forcing exactly $k$ vertices in $R$ and $\ell$ vertices in $R'$ is
		\[
		{n-b \choose \ell}\left( 1- \left(1-\frac{b+1}{n}\right)^a\right)^{\ell} \left(1-\frac{b+1}{n}\right)^{a(n-b-\ell)}
		{m-a \choose k}\left( 1-\left(1-\frac{a+1}{m}\right)^b\right)^k \left(1-\frac{a+1}{m}\right)^{b(m-a-k)}
		\]
		where we define $0^0=1$.
\end{prop}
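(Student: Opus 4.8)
The plan is to decompose a single round of the process into two independent families of Bernoulli trials and then recognize the count of newly forced vertices on each side as a binomial random variable. Concretely, every force attempt in one round is an independent Bernoulli trial indexed by an ordered pair (blue source, white target): attempts targeting the $n-b$ white vertices of $R'$ originate only from the $a$ blue vertices of $R$ and each succeed with probability $\frac{b+1}{n}$, while attempts targeting the $m-a$ white vertices of $R$ originate only from the $b$ blue vertices of $R'$ and each succeed with probability $\frac{a+1}{m}$. These two families are indexed by disjoint sets of pairs, so they are independent; hence the probability of forcing exactly $\ell$ vertices in $R'$ and $k$ vertices in $R$ factors as the product of the two one-sided probabilities, and it suffices to compute each factor separately.

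First I would analyze the side $R'$. Fix a white vertex $w \in R'$. The event that $w$ is forced is determined precisely by the $a$ attempts aimed at $w$, which are independent and each fail with probability $1 - \frac{b+1}{n}$; thus $w$ is forced with probability $1 - \left(1 - \frac{b+1}{n}\right)^a$. Because distinct white vertices of $R'$ are targeted by disjoint collections of attempts, the ``forced'' events for the $n-b$ white vertices of $R'$ are mutually independent and identically distributed. Therefore the number of forced vertices in $R'$ is binomial with parameters $n-b$ and $p_{R'} = 1 - \left(1 - \frac{b+1}{n}\right)^a$, giving
\[
\pr(\text{exactly } \ell \text{ forced in } R') = \binom{n-b}{\ell} p_{R'}^{\ell} (1-p_{R'})^{n-b-\ell} = \binom{n-b}{\ell}\left(1 - \left(1 - \tfrac{b+1}{n}\right)^a\right)^{\ell}\left(1 - \tfrac{b+1}{n}\right)^{a(n-b-\ell)},
\]
which is the first factor of the claimed expression. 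An identical argument with the roles of $R$ and $R'$, $a$ and $b$, $m$ and $n$, and $k$ and $\ell$ interchanged yields the second factor, and multiplying the two (justified by the cross-side independence established above) completes the proof.

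The only point needing care --- and the main obstacle, though a mild one --- is justifying the two independence claims rather than taking them for granted: the cross-side independence of forces into $R$ versus forces into $R'$, and the within-side independence of the ``forced'' events across distinct white vertices. Both follow from the same structural fact, namely that a round is a collection of mutually independent per-pair Bernoulli trials whose success probabilities are fixed by the state $(a,b)$ at the start of the round, so that events depending on disjoint sets of trials are independent. I would also record that the convention $0^0 = 1$ correctly handles the degenerate cases: when $a = 0$ no vertex can force into $R'$, and the factor $\left(1 - \tfrac{b+1}{n}\right)^{a(n-b-\ell)}$ then forces $\ell = 0$ with probability one, while when $b+1 = n$ every white vertex of $R'$ is forced with probability one; both outcomes are reproduced by the formula, and symmetrically for $R$.
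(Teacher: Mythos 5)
Your proof is correct and takes essentially the same approach as the paper: the paper states this proposition without a separate argument because it follows directly from the per-pair independent Bernoulli trials described in the preceding paragraph, which is exactly the two-sided binomial decomposition you give (mirroring the paper's explicit derivation of the analogous formula \eqref{p:Kn-kforce} for $K_n$). One trivial slip in your side remark on degenerate cases: when $a=0$ it is the success factor $\left(1-\left(1-\frac{b+1}{n}\right)^a\right)^{\ell}=0^{\ell}$ that forces $\ell=0$ with probability one, not the failure factor $\left(1-\frac{b+1}{n}\right)^{a(n-b-\ell)}$, which equals $1$ in that case.
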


\begin{table}[h!]
	\begin{center}
		\begin{tabular}[h]{||c|c|r|r||c|c|r|r||}
			\hline
			$m$ & $n$ &  $\ept(K_{m, n}, \left\{u\right\})$ &  $\ept(K_{m, n}, \left\{v\right\})$ & $m$ & $n$ &  $\ept(K_{m, n}, \left\{u\right\})$ &  $\ept(K_{m, n}, \left\{v\right\})$ \\
			\hline
1 & 1 & 1.0 & 1.0 & 1 & 8 & 4.81183 & 4.62020\\
1 & 2 & 2.0 & 2.0 & 2 & 8 & 4.18540 & 4.08068\\
2 & 2 & 2.33333 & 2.33333 & 3 & 8 & 4.05503 & 3.98838\\
1 & 3 & 2.76316 & 2.625 & 4 & 8 & 4.01358 & 3.97359\\
2 & 3 & 2.78684 & 2.79028 & 5 & 8 & 4.00381 & 3.98047\\
3 & 3 & 3.02251 & 3.02251 & 6 & 8 & 4.01553 & 4.00292\\
1 & 4 & 3.34171 & 3.2 & 7 & 8 & 4.04534 & 4.04002\\
2 & 4 & 3.21498 & 3.19900 & 8 & 8 & 4.08905 & 4.08905\\
3 & 4 & 3.29626 & 3.29506 & 1 & 9 & 5.06339 & 4.86653\\
4 & 4 & 3.43624 & 3.43624 & 2 & 9 & 4.34793 & 4.23801\\
1 & 5 & 3.80904 & 3.64678 & 3 & 9 & 4.18620 & 4.11382\\
2 & 5 & 3.53899 & 3.48350 & 4 & 9 & 4.12900 & 4.08306\\
3 & 5 & 3.53847 & 3.51642 & 5 & 9 & 4.10694 & 4.07822\\
4 & 5 & 3.59296 & 3.58345 & 6 & 9 & 4.10467 & 4.08725\\
5 & 5 & 3.67540 & 3.67540 & 7 & 9 & 4.11853 & 4.10878\\
1 & 6 & 4.19683 & 4.01910 & 8 & 9 & 4.14588 & 4.14163\\
2 & 6 & 3.79086 & 3.70709 & 9 & 9 & 4.18336 & 4.18336\\
3 & 6 & 3.73857 & 3.69517 & 1 & 10 & 5.28772 & 5.08642\\
4 & 6 & 3.74500 & 3.72317 & 2 & 10 & 4.49207 & 4.37677\\
5 & 6 & 3.78224 & 3.77314 & 3 & 10 & 4.30347 & 4.22654\\
6 & 6 & 3.84289 & 3.84289 & 4 & 10 & 4.23247 & 4.18154\\
1 & 7 & 4.52624 & 4.34043 & 5 & 10 & 4.20132 & 4.16784\\
2 & 7 & 4.00156 & 3.90376 & 6 & 10 & 4.18947 & 4.16770\\
3 & 7 & 3.90751 & 3.84961 & 7 & 10 & 4.19182 & 4.17811\\
4 & 7 & 3.88562 & 3.85339 & 8 & 10 & 4.20632 & 4.19839\\
5 & 7 & 3.89411 & 3.87717 & 9 & 10 & 4.23087 & 4.22732\\
6 & 7 & 3.92618 & 3.91920 & 10 & 10 & 4.26292 & 4.26292\\
7 & 7 & 3.97698 & 3.97698 & & & & \\ \hline\end{tabular}\vspace{2pt}
\caption{\label{tab:Kmn}Values of $\ept(K_{m, n}, \left\{u\right\})$ and $\ept(K_{m, n}, \left\{v\right\})$ with $u$ in the part of size $m$ and $v$ in the part of size $n$ for $1 \leq m \leq n \leq 10$.}
	\end{center}
\end{table}

If we start with the initial blue vertex $v$ in $R$, we can construct our list of states as 
\[\osl=\{(1,0), (1,1),...,(1,n),(2,0),(2,1),...,(2,n),...,(m,0),(m,1),...,(m,n)\}.\] 

Constructing the matrix and applying Theorem \ref{t:ept-M} for various values of $m$ and $n$  produced the data in Table \ref{tab:Kmn}. Note that $K_{2, 3}$ is an outlier in the sense that $m = 2, n = 3$ is the only pair of values (up to $n = 10$) for which $\ept(K_{m, n})$ is not achieved by choosing a vertex in the larger partite set. Based on this data, we make the following conjecture:

\begin{conj} Let $K_{m,n}$ have partite vertex sets $R$ and $R'$ of orders $m$ and $n$ respectively, and let $u\in R$ and $v\in R'$. If $n>3$ and $m<n$, then $\ept(K_{m,n},\{u\})>\ept(K_{m,n},\{v\})$.
\end{conj}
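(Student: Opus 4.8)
The conjecture is, by definition, a comparison of two hitting times of the \emph{same} Markov chain $M(K_{m,n},\osl)$. Writing $T_{m,n}(a,b)$ for the expected number of rounds to reach the all-blue state $(m,n)$ from the state $(a,b)$, we have $\ept(K_{m,n},\{u\})=T_{m,n}(1,0)$ (the blue vertex $u$ lies in the part $R$ of size $m$) and $\ept(K_{m,n},\{v\})=T_{m,n}(0,1)$ (the blue vertex $v$ lies in the part $R'$ of size $n$), each computable from Theorem \ref{t:ept-M}. Thus the plan is to prove the single inequality $T_{m,n}(1,0)>T_{m,n}(0,1)$; the transpose symmetry $(a,b,m,n)\leftrightarrow(b,a,n,m)$ of the transition probabilities for $K_{m,n}$ recorded above shows that this is exactly the statement that, on the common graph $K_{m,n}$, propagation from a single blue vertex is slower when that vertex is placed in the smaller part.

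The main tool I would use is a coupling of the two processes on $K_{m,n}$: run process $A$ from one blue vertex in $R$ (start state $(1,0)$) and process $B$ from one blue vertex in $R'$ (start state $(0,1)$), and aim to build a joint law in which $B$ reaches all-blue no later than $A$, strictly earlier on an event of positive probability, which yields $T_{m,n}(0,1)<T_{m,n}(1,0)$. The delicacy is that a first-order comparison ties: the one-round expected number of newly forced vertices is exactly $1$ from both $(1,0)$ and $(0,1)$. The asymmetry surfaces only at second order, through the inequality $(1-1/m)^m<(1-1/n)^n$ valid for $m<n$, i.e. the probability that the first round forces nothing is strictly smaller for $B$. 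The coupling must therefore be arranged to convert this concentration advantage into a genuine domination, for instance by coupling the per-vertex Bernoulli forcing attempts of the two runs so that, once both processes have blue vertices in each part, the pair of blue counts $(|B_B\cap R|,|B_B\cap R'|)$ coordinatewise dominates $(|B_A\cap R|,|B_A\cap R'|)$ thereafter.

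I expect two obstacles, and they are the reason the statement is only conjectured. First, the two processes begin in \emph{incomparable} configurations: at time $0$, $A$ is blue in $R$ and white throughout $R'$ while $B$ is the reverse, so no coordinatewise domination can hold initially, and the coupling must absorb a transient --- I would condition on the first one or two rounds to drive both runs into states that are blue in both parts before installing the domination invariant. Second, and more seriously, any valid proof must be \emph{quantitative enough to detect the threshold} $n>3$: the value $\ept(K_{2,3})$ in Table \ref{tab:Kmn} shows the inequality reverses at $(m,n)=(2,3)$, so a purely soft monotone coupling cannot exist --- it would prove a false statement. A successful argument therefore seems to require either encoding the constraints $m<n$ and $n>3$ directly into the domination invariant so that the single exceptional pair is visibly excluded, or a hybrid scheme that settles finitely many small cases by direct evaluation of the exact formula in Theorem \ref{t:ept-M} and handles all large $n$ by a robust drift-plus-concentration estimate. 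Reconciling the tight small-case behavior with a uniform bound for large $n$ is, I expect, the principal difficulty.
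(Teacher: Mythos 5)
You were asked to prove a statement that the paper itself does not prove: it appears in the paper only as a conjecture, formulated on the basis of the numerical data in Table \ref{tab:Kmn} (computed via Theorem \ref{t:ept-M}), with $K_{2,3}$ explicitly flagged as the outlier that forces the hypothesis $n>3$. So there is no proof in the paper to compare your argument against, and any complete argument you gave would have gone beyond the paper.

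Your proposal, however, is also not a proof; it is a research plan, and you say as much. What you get right is real: the reduction to a single-chain hitting-time comparison $T_{m,n}(1,0)>T_{m,n}(0,1)$; the observation that the expected number of first-round forces is exactly $1$ from both start states, so the comparison cannot be won at first order; the identification of the second-order asymmetry $(1-1/m)^m<(1-1/n)^n$ for $m<n$; and, importantly, the observation that the reversal at $(m,n)=(2,3)$ in Table \ref{tab:Kmn} rules out any purely soft monotone coupling, since such a coupling would prove a statement that is false without the hypothesis $n>3$. These are the genuine obstacles. But none of them is overcome: the coupling is never constructed, the domination invariant (coordinatewise domination of the pair of blue counts after a transient) is never defined precisely, never shown to be achievable after the incomparable start, and never shown to be preserved by the dynamics; and the quantitative step that would distinguish $n>3$ from $n=3$ is left entirely open. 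You also do not carry out the fallback you mention (finitely many small cases by exact computation plus a drift estimate for large $n$) --- no such estimate is stated, let alone proved. As it stands, your text neither proves the conjecture nor reduces it to a finite verification; the statement remains open, exactly as it is in the paper.
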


\subsection{The sun and comb graphs}


Let the $n\sun$ be obtained from the $n$-cycle $C_n$ by adding a single leaf to each vertex. Finding the expected propagation time of the $n\sun$ is equivalent to finding the expected propagation time of the embedded cycle $C_n$, and then adding 1 to color all remaining leaves. If we focus primarily on the embedded cycle, then states are determined by the number of blue vertices in the cycle and how many of the outermost leaves have been forced, as the inner leaves have no effect on the cycle propagation.

If the initial blue vertex is on the cycle, we start with the two states involving one blue vertex on the cycle, without or with the adjacent leaf, which we denote $1$ and $1L$, respectively. Next, denote the intermediate states $(c,\ell)$, where $2\leq c\leq n-2$ and $0\leq \ell\leq 2$. Here, $c$ indicates the number of blue vertices on the cycle and $\ell$ indicates the number of outermost leaves forced; notice that intermediate states with the same value of $\ell$ behave similarly to one another. Denote the  last four states $(n-1,\ell)$ and $(n)$, where $n-1$ or $n$ of the cycle vertices are blue, respectively, and $0\le \ell\le 2$.
All outcomes and probabilities for these states are given in Table \ref{tab:n-sun-props}. 

\begin{table}[h!]
	\begin{center}
	\begin{tabular}{ ||c | c | c || c | c | c ||}
		\hline
		State at time $t$ & State at time $t+1$ & Prob. & State at time $t$ & State at time $t+1$ & Prob.\\ 
		\hline
		1 & 1 & $\frac{8}{27}$ 				& $(c,0)$ & $(c+2,0)$ & $\frac{4}{9}$ \\ \hline
		1 & 1L & $\frac{4}{27}$  				& $(c,1)$ & $(c+1,0)$ & $\frac{1}{9}$ \\ \hline
		1 & (2,0) & $\frac{8}{27}$  			& $(c,1)$ & $(c+1,1)$ & $\frac{2}{9}$ \\ \hline
		1 & (2,1) & $\frac{4}{27}$			& $(c,1)$ & $(c+2,0)$ & $\frac{2}{3}$ \\ \hline
		1 & (3,0) & $\frac{1}{9}$ 			& $(c,2)$ & $(c+2,0)$ & $1$ \\ \hline
		1L & 1L & $\frac{1}{9}$  				& $(n-1,0)$ & $(n-1,0)$ & $\frac{1}{81}$ \\ \hline
		1L & (2,1) & $\frac{4}{9}$  			& $(n-1,0)$ & $(n-1,1)$ & $\frac{4}{81}$ \\ \hline
		1L & (3,0) & $\frac{4}{9}$  			& $(n-1,0)$ & $(n-1,2)$ & $\frac{4}{81}$ \\ \hline
		$(c,0)$ & $(c,0)$ & $\frac{1}{81}$ 		& $(n-1,0)$ & $(n)$ &  $\frac{8}{9}$ \\ \hline
		$(c,0)$ & $(c,1)$ & $\frac{4}{81}$ 		& $(n-1,1)$ & $(n)$ & $1$ \\ \hline
		$(c,0)$ & $(c,2)$ & $\frac{4}{81}$ 		& $(n-1,2)$ & $(n)$ & $1$\\ \hline
		$(c,0)$ & $(c+1,0)$ & $\frac{4}{27}$ 		& $(n)$ & $(n)$ & $1$\\ \hline
		$(c,0)$ & $(c+1,1)$ & $\frac{8}{27}$  		& & & \\ \hline
	\end{tabular}\vspace{2pt}
\caption{\label{tab:n-sun-props} Transition probabilities for the ordered state list  $\osl=\{1,1L,...,(c,0),(c,1),(c,2),...,(n-1,0),(n-1,1),(n-1,2),(n)\}$  of the $n\sun$ as defined above.}\vspace{-5pt}
	\end{center}
\end{table}

Note that we leave out the fully propagated state. Instead, we add 1 to the propagation time found from the Markov matrix to account for the round needed to force all remaining leaves after reaching state $(n)$. Using Theorem \ref{t:ept-M} and adding 1 for the final round, we can obtain exact values for $\ept(n\sun)$.  Decimal approximations of these values are given in Table \ref{tab:n-sun-ept}. 
This table also lists the differences in expected propagation time for consecutive $n$, i.e.,  $\Delta \ept(n\sun)= \ept(n\sun)-\ept((n-1)\sun)$.  The clear trend that $\Delta \ept(n\sun)\to 0.6875$ as $n$ becomes large leads to the next conjecture.  

\begin{table}[h!]
	\begin{center}
\begin{tabular}{|| c | c | c || c | c | c ||} \hline
    $n$ & $\ept(n\sun)$ & $\Delta \ept(n\sun)$ & $n$
    & $\ept(n\sun)$ & $\Delta \ept(n\sun)$ \\ \hline
    $5$ & $4.77765692007797$ & $0.729718323586744$ & $25$ &
    $18.5143540671558$ & $0.687500000595474$ \\ \hline
    $6$ & $5.44614700021659$ & $0.668490080138619$ & $26$ &
    $19.2018540669176$ & $0.687499999761808$ \\ \hline
    $7$ & $6.14172265492263$ & $0.695575654706038$ & $27$ &
    $19.8893540670129$ & $0.687500000095277$ \\ \hline
    $8$ & $6.82588757375988$ & $0.684164918837255$ & $28$ &
    $20.5768540669748$ & $0.687499999961890$ \\ \hline
    $9$ & $7.51474489939839$ & $0.688857325638504$ & $29$ &
    $21.2643540669900$ & $0.687500000015245$ \\ \hline
    $10$ & $8.20169679288223$ & $0.686951893483841$ & $30$ &
    $21.9518540669839$ & $0.687499999993904$ \\ \hline
    $11$ & $8.88941718576886$ & $0.687720392886632$ & $31$ &
    $22.6393540669863$ & $0.687500000002437$ \\ \hline
    $12$ & $9.57682877299639$ & $0.687411587227531$ & $32$ &
    $23.3268540669854$ & $0.687499999999023$ \\ \hline
    $13$ & $10.2643641949093$ & $0.687535421912946$ & $33$ &
    $24.0143540669858$ & $0.687500000000391$ \\ \hline
    $14$ & $10.9518500135211$ & $0.687485818611719$ & $34$ &
    $24.7018540669856$ & $0.687499999999844$ \\ \hline
    $15$ & $11.6393556888815$ & $0.687505675360446$ & $35$ &
    $25.3893540669857$ & $0.687500000000064$ \\ \hline
    $16$ & $12.3268534181140$ & $0.687497729232458$ & $36$ &
    $26.0768540669856$ & $0.687499999999975$ \\ \hline
    $17$ & $13.0143543265595$ & $0.687500908445543$ & $37$ &
    $26.7643540669856$ & $0.687500000000011$ \\ \hline
    $18$ & $13.7018539631505$ & $0.687499636590999$ & $38$ &
    $27.4518540669856$ & $0.687499999999996$ \\ \hline
    $19$ & $14.3893541085209$ & $0.687500145370441$ & $39$ &
    $28.1393540669856$ & $0.687500000000000$ \\ \hline
    $20$ & $15.0768540503712$ & $0.687499941850303$ & $40$ &
    $28.8268540669856$ & $0.687500000000000$ \\ \hline
    $21$ & $15.7643540736315$ & $0.687500023260217$ & $41$ &
    $29.5143540669856$ & $0.687500000000000$ \\ \hline
    $22$ & $16.4518540643273$ & $0.687499990695837$ & $42$ &
    $30.2018540669856$ & $0.687500000000000$ \\ \hline
    $23$ & $17.1393540680490$ & $0.687500003721681$ & $43$ &
    $30.8893540669856$ & $0.687500000000000$ \\ \hline
    $24$ & $17.8268540665603$ & $0.687499998511324$ & $44$ &
    $31.5768540669856$ & $0.687500000000000$ \\ \hline
    $25$ & $18.5143540671558$ & $0.687500000595474$ & $45$ &
    $32.2643540669856$ & $0.687500000000000$ \\ \hline
	\end{tabular}\vspace{2pt}
\caption{\label{tab:n-sun-ept} Expected propagation times for the $n\sun$, and differences $\Delta \ept(n\sun)= \ept(n\sun)-\ept((n-1)\sun)$ for $n=5,\dots,45$.}\vspace{-5pt}
	\end{center}
\end{table}

\begin{conj}
 $\lim_{n\rightarrow\infty}  \lp\ept(n\sun)-\ept((n-1)\sun)\rp= \frac{11}{16}=0.6875$. 
\end{conj}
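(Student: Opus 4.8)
The plan is to exploit the fact that the Markov chain in Table~\ref{tab:n-sun-props} is \emph{translation invariant in the bulk}: for the intermediate states $(c,\ell)$ with $2\le c\le n-2$, the one-round transition probabilities depend only on the leaf-count $\ell\in\{0,1,2\}$ and on the increment $\Delta c\in\{0,1,2\}$, and not on $c$ or on $n$. Thus the propagation process is a Markov-modulated random walk in which the cycle-count $c$ drifts upward while being modulated by the finite internal chain on $\ell$. The only $n$-dependence sits in the initial states $1,1L$ and in the goal-boundary states $(n-1,\ell),(n)$, each of which contributes an $n$-independent amount of expected time. Consequently I expect $\ept(n\sun)=\alpha n+C+o(1)$ for constants $\alpha,C$, so that $\ept(n\sun)-\ept((n-1)\sun)\to\alpha$, and the whole problem reduces to identifying $\alpha$ and justifying the error estimate.

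To compute $\alpha$, first marginalize the bulk transitions over $\Delta c$ to obtain the internal transition matrix on $\ell$,
\[
P_\ell=\begin{bmatrix} \frac{49}{81} & \frac{28}{81} & \frac{4}{81}\\ \frac{7}{9} & \frac{2}{9} & 0\\ 1 & 0 & 0\end{bmatrix},
\]
whose (unique, since $P_\ell$ is irreducible and aperiodic) stationary distribution is $\pi=\big(\tfrac{81}{121},\tfrac{36}{121},\tfrac{4}{121}\big)$. The conditional expected increments are $d_0=\e[\Delta c\mid \ell=0]=\tfrac43$, $d_1=\tfrac53$, and $d_2=2$, so the stationary drift is $v=\pi\cdot(d_0,d_1,d_2)=\tfrac{16}{11}$. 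Since the expected number of rounds needed to raise $c$ by one unit is $1/v$, this already pinpoints the candidate limit $\alpha=1/v=\tfrac{11}{16}$.

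To make this rigorous I would set up the backward recurrence for the expected hitting time $g_\ell(j)$ of the goal from a bulk state with distance-to-go $j=n-c$ and internal state $\ell$; in the bulk this is a constant-coefficient linear recurrence whose coefficients are read directly from Table~\ref{tab:n-sun-props}. Substituting the ansatz $g_\ell(j)=\alpha j+a_\ell$ reduces the three recurrences to $(I-P_\ell)\mathbf a=\mathbf 1-\alpha\mathbf d$; because $I-P_\ell$ is singular with left null vector $\pi$, a solution $\mathbf a$ exists if and only if $\pi\cdot(\mathbf 1-\alpha\mathbf d)=0$, i.e.\ $\alpha=1/(\pi\cdot\mathbf d)=\tfrac{11}{16}$. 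Matching this particular solution to the fixed startup and goal boundaries then yields $\ept(n\sun)=\tfrac{11}{16}n+C+o(1)$ and hence the claimed limit.

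The \textbf{main obstacle} is controlling the $o(1)$ error, i.e.\ showing that the homogeneous solutions of the bulk recurrence decay as $j\to\infty$: apart from the mode at $x=1$ responsible for the linear-plus-constant growth, every root of the associated characteristic polynomial must lie strictly inside the unit disk. The data in Table~\ref{tab:n-sun-ept}, where $\Delta\ept(n\sun)$ oscillates about $0.6875$ with geometrically shrinking amplitude, strongly suggests a single complex-conjugate pair of subdominant roots of modulus slightly less than $1$; proving this spectral-gap/stability estimate for the bulk transfer operator (equivalently, invoking a Markov renewal theorem for the modulated walk) is precisely what upgrades the averaged statement $\ept(n\sun)/n\to\alpha$ to convergence of the first difference to $\tfrac{11}{16}$. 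A secondary point to verify is that the expected-propagation-time-minimizing initial vertex is a cycle vertex for all large $n$, so that the constant $C$ is stable in $n$.
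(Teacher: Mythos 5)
First, an important point of comparison: the paper does \emph{not} prove this statement. It appears only as a conjecture supported by the numerical data in Table \ref{tab:n-sun-ept}, so there is no paper proof to measure your argument against; you are attempting something the authors left open. Within that attempt, your structural reduction and your identification of the constant are correct and can be verified directly from Table \ref{tab:n-sun-props}: the bulk transitions depend only on $\ell$ and the increment $\Delta c$, your marginal matrix $P_\ell$ is exactly right, its stationary distribution is $\pi=\lp\tfrac{81}{121},\tfrac{36}{121},\tfrac{4}{121}\rp$, the conditional drifts are $\lp\tfrac{4}{3},\tfrac{5}{3},2\rp$, and $\pi\cdot\mathbf{d}=\tfrac{176}{121}=\tfrac{16}{11}$, so the candidate limit $1/(\pi\cdot\mathbf{d})=\tfrac{11}{16}$ agrees exactly with the conjectured value $0.6875$ --- this is a genuine explanation of where the paper's empirical constant comes from. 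The Fredholm-alternative step forcing $\alpha=1/(\pi\cdot\mathbf{d})$ is also sound, and the ``secondary point'' you raise is actually a non-issue: both vertex orbits (cycle start and leaf start) run the same bulk chain, hence both give $\alpha n+C+o(1)$ with the same $\alpha$ and possibly different $C$, and the minimum of two such sequences again has first differences tending to $\alpha$.

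However, as written this is a program rather than a proof, and you say so yourself: the decisive step --- that every homogeneous mode of the bulk recurrence other than the one at $x=1$ decays --- is flagged as an obstacle, not established, and without it you get only $\ept(n\sun)/n\to\tfrac{11}{16}$, not convergence of the first difference. The good news is that this gap closes by a finite computation, with no need for an abstract Markov renewal theorem: substituting $g_\ell(j)=x^j u_\ell$ into the three bulk hitting-time equations gives a $3\times 3$ polynomial system $A(x)\mathbf{u}=\bzero$, and one can compute
\[
\det A(x)\;=\;\tfrac{16}{729}\,x^3\,(x-1)(9x+2)(5x+2),
\]
so the only modes are $x=1$, $x=-\tfrac{2}{9}$, $x=-\tfrac{2}{5}$, together with finitely supported boundary modes from the root $x=0$. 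All non-unit roots have modulus at most $\tfrac{2}{5}<1$; growing modes do not exist at all (and would in any case be excluded by the linear bound of Theorem \ref{t:linupperbd}, since the bulk chain is independent of $n$). In particular your guess of a complex-conjugate pair of modulus slightly below $1$ is off: the subdominant roots are real and negative, which is what produces the observed sign-alternating deviations of $\Delta\ept(n\sun)$ from $0.6875$ in Table \ref{tab:n-sun-ept}, shrinking by a factor of roughly $-\tfrac{2}{5}$ per step. With this factorization in hand, matching your particular solution to the boundary data at $j=0,1$ and to the entry states $1,1L$ yields $\ept(n\sun,\{v\})=\tfrac{11}{16}n+C+O\lp(2/5)^n\rp$ for each orbit, and the conjecture follows. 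So: right approach, right constant, genuinely incomplete at the stability step --- but completable exactly along the lines you propose.
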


We can modify the above process for expected propagation time starting at a leaf rather than on the cycle. If the initial blue vertex is a leaf, the first step is deterministic, yielding state $1L$. Afterwards, the states and probabilities proceed as before. Thus, we simply need to construct the list of states starting at $1L$ instead of $1$, and after finding the expected propagation time from the Markov matrix, add 2 to account for the first and last deterministic steps. In general, this yields a slower expected propagation time, though propagation starting at a leaf still suggests the aforementioned limit of $\frac{11}{16}$.

We can use a similar process to construct the Markov matrix for the $n\comb$, which is obtained from the path $P_n$ by adding a leaf to each vertex.  As the initial blue vertex, choose  $v=\lf\frac{n+1}2\rf$ on the embedded path, which is the center vertex for odd $n$ and  the left center vertex for even $n$. For the comb, we will need to track both the number of vertices forced to the left and to the right of the initial vertex, along with whether or not the outermost leaves are blue. The details, which are similar to the $n\sun$ but messier, are given in   Appendix 2 \cite{ncomb} (available online), along with data.  

\section{Asymptotic bounds for probabilistic zero forcing}\label{sBounds} 

In this section, we prove asymptotically tight bounds up to a constant factor on several families of graphs, including some that were partially bounded in  \cite{GH18-PZF}. We prove that $\ept(K_n)=\Theta(\log\log n)$. Next we generalize the bound $\ept(K_{1,n}) = \Theta(\log n)$ from \cite{GH18-PZF} by proving that $\ept(K_{c, n}) = \Theta(\log n)$ for constant $c$, where the bound depends on $c$. Generalizing the same bound in a different direction, we show $\Theta(\log n)$ bounds on graphs obtained by adding a universal vertex to a graph of maximum degree at most $c$ (a universal vertex is adjacent to every other vertex). Finally, we prove that $\ept(G) = O(n)$ for all connected graphs $G$ of order $n$.

Geneson and Hogben \cite{GH18-PZF} proved that $\ept(K_n)=\Omega(\log\log n)$. In the next result, we show that bound is tight by proving that $\ept(K_n)=O(\log\log n)$. The method of proof is similar to that used in the proof  in \cite{GH18-PZF} that $\ept(K_{1,n}) = O(\log n)$.

\begin{thm}\label{tkn}
For positive integers $n$, $\ept(K_n)=\Theta (\log\log(n))$.
\end{thm}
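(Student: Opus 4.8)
The plan is to prove the upper bound $\ept(K_n) = O(\log\log n)$, since Geneson and Hogben already established the matching lower bound $\ept(K_n) = \Omega(\log\log n)$; together these give the claimed $\Theta(\log\log n)$. By the symmetry of $K_n$ we have $\ept(K_n) = \ept(K_n,\{v\})$ for any single vertex $v$, so it suffices to bound the expected time to finish starting from one blue vertex. Write $b_t$ for the (random) number of blue vertices after round $t$; by Theorem~\ref{t:M(K_n)} this is a monotone Markov chain on $\{1,\dots,n\}$, so $b_t$ is non-decreasing, a feature I will exploit repeatedly.

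First I would record the one-round growth estimate. In the state with $b$ blue vertices each white vertex is forced independently with probability $p_b = 1-(1-\frac{b}{n-1})^b$, so the number $X$ of newly blue vertices is $\mathrm{Binomial}(n-b,\,p_b)$. Using $1-(1-x)^b \ge 1-e^{-bx} \ge \tfrac12 bx$ for $bx\le 1$, one gets $p_b \ge \frac{b^2}{2n}$ throughout the range $b \le \tfrac12\sqrt n$, hence $\mathbf{E}[X] \ge \tfrac{b^2}{4}$. A second-moment (Chebyshev) bound then shows that once $b$ exceeds a suitable fixed constant $b_0$, with probability at least $\tfrac12$ the blue set \emph{squares}, i.e.\ $b_{t+1} \ge \tfrac{1}{8} b_t^2$.

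With this in hand I would split the run into three phases and bound each in expectation. Phase~0 ($1\le b\le b_0$): since $\Pr(X\ge1)\ge 1-e^{-1}$ throughout this bounded range, $b$ increases by at least one each round with constant probability, so reaching $b>b_0$ costs $O(1)$ expected rounds. Phase~1 ($b_0<b\le \tfrac12\sqrt n$): setting $Z_t=\log_2 b_t$, a squaring round gives $Z_{t+1}\ge 2Z_t-3\ge \tfrac32 Z_t$ once $b_0$ is large enough, while every round satisfies $Z_{t+1}\ge Z_t$ by monotonicity, so each conditionally-probability-$\ge\tfrac12$ success multiplies $Z$ by at least $\tfrac32$; reaching $Z=\tfrac12\log_2 n$ therefore needs only $O(\log\log n)$ successes. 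The number of rounds is stochastically dominated by a sum of $O(\log\log n)$ independent $\mathrm{Geometric}(\tfrac12)$ variables, so Phase~1 has expected length $O(\log\log n)$. Phase~2 ($b> \tfrac12\sqrt n$): here $p_b\ge 1-e^{-1/4}$ is bounded below by a positive constant, so one round sends $b$ to a constant fraction of $n$ with high probability, after which $q_b=(1-\frac{b}{n-1})^b$ is exponentially small and a union bound over the remaining white vertices finishes the process; completion from any state in this phase occurs within two rounds with constant probability, so Phase~2 costs $O(1)$ expected rounds. Summing the three phases gives $\ept(K_n)=O(\log\log n)$.

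The main obstacle is the passage from ``the blue set squares in expectation'' to a genuine bound on the \emph{expected} number of rounds: a naive high-probability argument controls the typical run but not the contribution of rare slow runs to the expectation. The monotonicity of $b_t$ is what rescues this — because successes never undo progress, each conditionally-independent success compounds the potential $Z_t$ multiplicatively regardless of the intervening failures, which is exactly what licenses the stochastic domination by a sum of geometrics. Verifying the one-round squaring probability uniformly over all reachable states (so the domination is valid conditionally on the past), and handling the boundary regimes $b\approx b_0$ and $b\approx\sqrt n$ where the clean $b^2/n$ estimate and the exponential-tail estimate respectively degrade, are the places where care is required.
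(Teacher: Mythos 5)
Your proposal is correct, and it shares the paper's engine --- a phase decomposition in which the blue count squares every $O(1)$ expected rounds, proved by a Chebyshev second-moment bound, so that $O(\log\log n)$ squarings suffice --- but both ends of your argument differ genuinely from the paper's. In the squaring phase, the paper restricts to $b \le \sqrt{n}/\log n$ and needs the sharper estimate $\mathbf{E}[X] = b^2 - o(b^2)$, obtained by a careful alternating binomial expansion; your bound $\mathbf{E}[X] \ge \tfrac{1}{4}b^2$ via $1-(1-x)^b \ge 1-e^{-bx} \ge \tfrac{1}{2}bx$ is more elementary and remains valid up to $b \le \tfrac{1}{2}\sqrt{n}$, at the harmless cost of the weaker squaring map $b \mapsto \tfrac{1}{8}b^2$. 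The larger difference is the endgame: the paper spends two further $\Theta(\log\log n)$ phases --- constant-factor growth from $\sqrt{n}/\log n$ to $\sqrt{n}\log n$ blue vertices, which imports Claim (C2) from the proof of Lemma 2.5 of \cite{GH18-PZF}, and then a white-count square-rooting phase $w \mapsto \sqrt{w}$ via Markov's inequality --- whereas you observe that once $b \ge \tfrac{1}{2}\sqrt{n}$ the per-vertex forcing probability is at least $1-e^{-1/4}$, so one round reaches $\Theta(n)$ blue vertices with high probability (unless $b \ge n/2$ already), after which $\bigl(1-\tfrac{b}{n-1}\bigr)^{b}$ is exponentially small and a union bound finishes in one more round; since this two-round completion estimate is uniform over the phase and the chain is monotone, that phase costs only $O(1)$ expected rounds. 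Your route is therefore self-contained (no appeal to (C2)) and strictly shorter, while the paper's extra phases merely give finer information about the acceleration through the range $\sqrt{n}/\log n \le b \le \sqrt{n}\log n$; both arguments convert per-round success probabilities into expectations the same way, by uniformity over states within a phase, geometric domination, and additivity of expected phase lengths, with the matching lower bound $\ept(K_n)=\Omega(\log\log n)$ cited from \cite{GH18-PZF} in both.
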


\begin{proof}
Let $K_n$ be the complete graph on $n$ vertices for $n \geq 5$. Let $b$ be the number of currently blue vertices and $w=n-b$ be the number of currently white vertices. For each white vertex $v_1,...,v_w$, define the indicator random variable $X_i$ to be 1 if $v_i$ is colored blue in the current round and 0 otherwise, and define $X=\sum_{i=1}^w X_i$. Since the $X_i$'s are i.i.d., we have that $\e[X]=w\e[X_i]=w\left(1-\left(1-\frac{b}{n-1}\right)^b\right)$ and $\var[X] = w\left(1-\left(1-\frac{b}{n-1}\right)^b\right)\left(1-\frac{b}{n-1}\right)^b.$  Since $\left(1-\left(1-\frac{b}{n-1}\right)^b\right)\le \lp1-\lp 1-\frac{b^2}{n-1}\rp\rp=\frac{b^2}{n-1}$  by Bernoulli's inequality, $\var[X]\le \frac w {n-1}b^2\left(1-\frac{b}{n-1}\right)^b \leq b^2$.

For $1\leq b\leq \frac{\sqrt{n}}{\log n}$, we first use binomial expansion on $\e[X]$ to obtain $\e[X] > \frac{wb^2}{n-1}- \sum_{k=1}^{\lf b/2 \rf}w \binom{b}{2k}\left(\frac{b}{n-1}\right)^{2k}$. For each term in the summation, 
\[ w{b\choose 2k}\left(\frac{b}{n-1}\right)^{2k}\leq (n-1)\cdot \frac{b^{2k}}{(2k)!}\cdot \frac{b^{2k}}{(n-1)^{2k}}=\frac{b^2}{(2k)!}\cdot\frac{b^{4k-2}}{(n-1)^{2k-1}}.
\] Since $b=o(\sqrt{n})$, we conclude $b^{4k-2}=o(\sqrt{n}^{4k-2})=o(n^{2k-1})$, and using this, we find \[\frac{b^2}{(2k)!}\cdot \frac{b^{4k-2}}{(n-1)^{2k-1}}=\frac{b^2}{(2k)!}\cdot o(1)=\frac{o(b^2)}{(2k)!}.\] 
Since $\sum_{k=1}^{\infty} \frac{1}{(2k)!}$ converges, this implies 
\[\sum_{k=1}^{\lf b/2 \rf}w{b\choose 2k}\left(\frac{b}{n-1}\right)^{2k}=\sum_{k=1}^{\lf b/2\rf}\frac{o(b^2)}{(2k)!}=o(b^2).\] 
For $b \leq \frac{\sqrt{n}}{\log n}$, we have $w \geq n- \frac{\sqrt{n}}{\log n}$, so $\frac{w}{n-1} = 1-o(1)$. We conclude that \[\e[X]> \frac{wb^2}{n-1}- \sum_{k=1}^{\lf b/2 \rf}w{b\choose 2k}\left(\frac{b}{n-1}\right)^{2k} =b^2-o(b^2).\] 
 Since $\e(X) = b^2-o(b^2)$ and $b^2=o(n)$, $\e(X) > \frac 5 6 b^2$ for $n$ sufficiently large. Thus by Chebyshev's inequality, \[\pr(X < \frac 1 2 b^2) \le \pr(|X-E(X)| > \frac 1 3 b^2) \le \frac{\var(X)}{\lp \frac 1 3 b^2\rp^2} \le \frac 9{b^2} \le 9/16\] for $b \ge 4$. 
Therefore there exists $c$ such that the expected number of rounds to transition from $b$ blue vertices to at least $\frac{1}{2}b^2$ blue vertices is at most $c$. To establish an upper bound on the expected number of rounds until there are at least $\frac{\sqrt{n}}{\log n}$ blue vertices, consider $f(x)=2^{2^x+1}$, which satisfies $f(k+1)=\frac{1}{2}f(k)^2$. If $2^{2^r+1}=\frac{\sqrt{n}}{\log n}$, then $r=\log_2\left(\log_2\left(\frac{\sqrt{n}}{\log n}\right)-1\right)$. Since the expected time to transition from $1$ to $4$ blue vertices is bounded by a constant, the total expected time to transition from $1$ to $\frac{\sqrt{n}}{\log n}$ blue vertices is at most $cr + O(1)=O(\log\log n)$.

For $\frac{\sqrt{n}}{\log n}\leq b\leq \sqrt{n} \log n$,  Claim (C2) established in the proof of Lemma 2.5 in \cite{GH18-PZF} implies $\pr\left(X\geq \frac{b}{4}\right)=\Omega(1)$. Thus there exists  a constant  $D$ 
such that the expected number of rounds to transition from $b$ blue vertices to at least $b+\frac{b}{4}=\frac{5}{4}b$ blue vertices is at most $D$. The expected total rounds to transition from $\frac{\sqrt{n}}{\log n}$ to $\sqrt{n}\log n$ blue vertices is at most  $Dr$, where $r$ is found by solving $\left(\frac{5}{4}\right)^r=\frac{\sqrt{n}\log n}{\sqrt{n}/\log n}$, which gives us $r=2\log_{5/4}\log n$ and $D r=O(\log\log n)$.

 For $n\ge 5$, $\left(\frac{1}{n}\right)^{\log n}\le\frac{1}{n^{\log 5}}<\frac{1}{n^{1.5}}$. So for  $\sqrt{n} \log n\leq b \leq n-2$,  \[\left(1-\frac{b}{n-1}\right)^b  \leq \left(1-\frac{\sqrt{n}\log n}{n}\right)^{\sqrt{n}\log n} <\left(e^{-\log n}\right)^{\log n} < \frac{1}{n^{1.5}}.\]
Note that $X$ ranges from $0$ to $w$, so $w-X$ is nonnegative. This allows us to apply Markov's inequality and linearity of expectation to show \[\pr(X<w-\sqrt{w})  = \pr(w-X>\sqrt{w}) \leq \frac{\e[w-X]}{\sqrt{w}} = \sqrt{w} \left(1-\frac{b}{n-1}\right)^b < \sqrt{w} \cdot \frac{1}{n^{1.5}} <\frac{1}{n}.\]
For the complementary event, we conclude $\pr(X\geq w-\sqrt{w})\geq \frac{n-1}{n}$. Then the expected time to transition from $w$ white vertices to at most $\sqrt{w}$ white vertices is at most $\frac{n}{n-1}$. Hence, the expected number of rounds to transition from $w = n - \sqrt{n}\log n$ to $2$ white vertices is at most $\frac{n}{n-1}\cdot r$, where $r$ is given by $w^{(1/2)^r}=2$. Solving this equation, we find $r=\log_2\log_2w$, implying that $\frac{n}{n-1} \cdot r=\frac{n}{n-1}\cdot \log_2\log_2 w=O(\log\log n)$. Note that for $w \leq 2$, the expected time that remains is bounded by a constant. Thus $\ept(K_n)=\Theta (\log\log(n))$. 
\end{proof}

It is known that if a graph $G$ of order $n$ has a universal vertex, then $\ept(G) = O(\log n)$ \cite[Corollary 2.6]{GH18-PZF}. In the next result, we use this fact to prove that $\ept(G) = \Theta(\log n)$ for graphs $G$ obtained by adding a universal vertex to a (not necessarily connected) graph of maximum degree at most $c$.

\begin{thm} Let $c$ be a fixed positive integer and let $\F_c$ be the family of graphs  having maximum degree at most $c$. 
Let $G$ be a graph of order $n$ with a universal vertex $u$ such that $G - u\in \F_c$.  Then  $\ept(G) = \Theta(\log n)$.
\end{thm}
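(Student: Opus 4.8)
The plan is to prove the two bounds separately; the upper bound is immediate and the lower bound is where the degree hypothesis does the work. For the upper bound, observe that $G$ itself has a universal vertex (namely $u$), so \cite[Corollary 2.6]{GH18-PZF} gives $\ept(G)=O(\log n)$ with no further argument. It remains to prove $\ept(G)=\Omega(\log n)$, and since $\ept(G)=\min_v\ept(G,\{v\})$ it suffices to establish $\ept(G,\{v\})=\Omega(\log n)$ for an \emph{arbitrary} initial vertex $v$, so that the constant in $\Omega$ is uniform over starting configurations.

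The key structural observation is that the number of blue vertices can grow by at most a bounded (in $c$) factor per round. Suppose that at the start of a round there are $b$ blue vertices, and let $Y$ be the number of white vertices that turn blue during the round. I would partition the white vertices into those adjacent to a blue non-universal vertex and those that are not. Since every non-universal vertex has at most $c$ neighbors other than $u$ (its degree in $G-u$ is at most $c$), the first set has size at most $cb$; bounding the force probability of each such vertex crudely by $1$ contributes at most $cb$ to $\e[Y]$. Every white vertex $w$ in the second set can be forced only by $u$, and only when $u$ is blue, in which case $\pr(u\to w)=\frac{b}{n-1}$ by the probabilistic color change rule; there are at most $n-b$ such vertices, contributing at most $(n-b)\frac{b}{n-1}\le b$. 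By the union bound and linearity of expectation, $\e[Y\mid b]\le (c+1)b$, so if $b_t$ denotes the number of blue vertices after $t$ rounds then $\e[b_{t+1}\mid b_t]\le (c+2)b_t$; iterating from $b_0=1$ gives $\e[b_t]\le (c+2)^t$.

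The final step converts this growth bound into a lower bound on expected propagation time via Markov's inequality. Writing $T$ for the propagation time, the process completes only once $b_t=n$, so for every $t$ we have $\pr(T\le t)=\pr(b_t=n)\le \e[b_t]/n\le (c+2)^t/n$. Choosing $t_0=\lf \tfrac12\log_{c+2}n\rf$ makes this probability at most $n^{-1/2}$ for all $t\le t_0$, whence $\ept(G,\{v\})=\e[T]=\sum_{t\ge 0}\pr(T>t)\ge t_0(1-n^{-1/2})=\Omega(\log n)$. Combining with the upper bound yields $\ept(G)=\Theta(\log n)$.

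I expect the main obstacle to be the per-round counting bound $\e[b_{t+1}\mid b_t]\le K(c)\,b_t$. The delicate point is to separate the contribution of the universal vertex, which may attempt to force every white vertex but does so only with the small probability $\frac{b}{n-1}$ (summing to $O(b)$ in total), from the contribution of the bounded-degree part of $G$, where individual force probabilities can be close to $1$ but only $O(cb)$ white vertices are eligible at all. Obtaining a factor depending only on $c$ is precisely what yields the sharp $\Omega(\log n)$ lower bound rather than a weaker estimate, and it is exactly here that the hypothesis $G-u\in\F_c$ is used.
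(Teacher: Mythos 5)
Your proposal is correct, and its lower-bound argument takes a genuinely different route from the paper's. The paper conditions on the number $\hat b$ of blue vertices at the moment the universal vertex $u$ first turns blue: if $\hat b \ge \sqrt n$ it uses the deterministic ball-growth bound $\hat b \le 1+c+\dots+c^t$ inside the bounded-degree part to get $t=\Omega(\log n)$ outright, and if $\hat b < \sqrt n$ it shows each round multiplies the blue count by a bounded factor \emph{with probability} $1-O(1/\sqrt n)$, importing the estimate $\pr(s\ge 4b)=O(1/\sqrt n)$ on the universal vertex's forces from the proof of Theorem 2.7 in \cite{GH18-PZF} and then following that proof's with-high-probability round count. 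You instead prove a single unconditional expected-growth inequality $\e[b_{t+1}\mid b_t]\le (c+2)b_t$, valid in every round whether or not $u$ is blue, by handling $u$ \emph{in expectation} (each of its forces succeeds with probability only $\frac{b}{n-1}$, so its total expected contribution is at most $b$) while the bounded-degree part exposes at most $cb$ eligible white vertices; Markov's inequality at $t_0=\lf \tfrac12\log_{c+2}n\rf$ then gives $\pr(T\le t_0)\le n^{-1/2}$ and hence $\e[T]=\Omega(\log n)$ uniformly over the initial vertex. Your route is more elementary and self-contained: it needs nothing from \cite{GH18-PZF} beyond Corollary 2.6 for the upper bound (which both proofs use), it avoids the case split on when $u$ turns blue, and it still delivers the same with-high-probability conclusion the paper's argument produces, since $\pr(T>t_0)\ge 1-n^{-1/2}$. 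One small repair is needed in your counting: when $u$ is white, $u$ itself belongs to your ``first set'' (it is adjacent to every blue vertex), so that set has size at most $cb+1$ rather than $cb$; but in exactly that case your second set contributes $0$ because a white $u$ cannot force, so $\e[Y\mid b]\le cb+1\le (c+1)b$ still holds and your constant $(c+2)$ survives unchanged.
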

\bpf
The upper bound follows from   \cite[Corollary 2.6]{GH18-PZF}.  For the lower bound, we consider two cases, based on the    the number $\hat{b}$ of blue vertices when $u$ is colored blue at time $t$.  First, suppose that $\hat{b} \geq \sqrt{n}$.  Since the maximum degree is at most $c$, $\sqrt{n} \leq \hat{b} \leq 1 + c + c^2 + \ldots + c^t = \frac{c^{t+1} - 1}{c-1}$. Thus, $\log_c(\sqrt{n}(c-1)+1) - 1 \leq t$, and we have the desired lower bound. 

If instead $\hat{b} < \sqrt{n}$, we consider the expected number of rounds to transition from at most $\sqrt{n}$ blue vertices to at least $\frac{n}{2}$ blue vertices. Let $X$ be the random variable for the number of new blue vertices in the current round, and let $g(b) = \pr(X\leq 4b+c b)$, where $b$ is the current number of blue vertices. We will show that $g(b) = 1 - O \left(\frac{1}{\sqrt{n}} \right)$ for $\sqrt{n} \leq b \leq \frac{n}{2}$. To this end, note that $X$ is at most the sum of the number of vertices forced by $u$, which we will denote by $s$, plus the number of vertices forced by  vertices other than $u$, which we will denote by $r$. Then, $Pr[s \geq 4b] =  O \left(\frac{1}{\sqrt{n}} \right)$  by the proof of Theorem 2.7 in \cite{GH18-PZF}. Because the maximum degree is at most $c$, we also have $r \leq cb$. Thus, $1 - g(b) = O \left(\frac{1}{\sqrt{n}} \right)$. From this point, the same steps as in the proof of Theorem 2.7 in \cite{GH18-PZF} show that with probability $1-o(1)$, the number of rounds to go from at most $\sqrt{n}$ blue vertices to at least $\frac{n}{2}$ blue vertices is $\Omega(\log n)$ (with the constant dependent on $c$), so  $\ept(G) = \Omega(\log n)$.
\epf

The next result builds on ideas in \cite{GH18-PZF}.

\begin{thm}
For any positive integers $m$ and $n$, $\ept(K_{m,n})=O(\log(m+n))$.  For a fixed positive integer $c$, $\ept(K_{c,n})=\Theta(\log(n))$.
\end{thm}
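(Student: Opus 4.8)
The plan is to establish the upper bound $\ept(K_{m,n}) = O(\log(m+n))$ for all $m,n$, and then the matching lower bound $\ept(K_{c,n}) = \Omega(\log n)$ for fixed $c$; together these give $\ept(K_{c,n}) = \Theta(\log n)$. Throughout, write $R,R'$ for the partite sets of sizes $m,n$, and let $a,b$ denote the numbers of blue vertices in $R,R'$ at a given round, so that a white vertex of $R'$ is forced in a round with probability $1-(1-\frac{b+1}{n})^a$. Since $\ept(K_{m,n}) = \min_v \ept(K_{m,n},\{v\})$, for the upper bound it suffices to bound $\ept(K_{m,n},\{v\})$ for one convenient choice of $v$, and I would start with a single blue vertex in $R$ and split the process into three phases. \emph{Phase one} produces at least one blue vertex on each side: a single blue $v\in R$ forces at least one white neighbor with probability $1-(1-1/n)^n \ge 1-1/e$, so this takes $O(1)$ expected rounds, after which $a\ge 1$ and $b\ge 1$ hold permanently. \emph{Phase two} is a multiplicative growth phase. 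Fixing attention on $R'$ while $b\le n/2$ and $a\ge 1$, Bernoulli's inequality gives $1-(1-\frac{b+1}{n})^a \ge \frac{b+1}{n}$, so the number $Y$ of newly blue vertices of $R'$ satisfies $\e[Y]\ge \frac{b+1}{2}$ and $\var(Y)\le \e[Y]$; Chebyshev's inequality then yields $\pr(Y\ge \frac14(b+1))\ge \frac12$ once $b$ exceeds a fixed constant, i.e. $b$ grows by a factor $\frac54$ with probability at least $\frac12$ per round.

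A negative-binomial count of the required number of successful rounds (with the small-$b$ regime absorbed into an $O(1)$ startup) then shows $R'$ reaches $b\ge n/2$ in $O(\log n)$ expected rounds, and symmetrically $R$ reaches $a\ge m/2$ in $O(\log m)$ expected rounds; since both occur in the same run, both thresholds are met after $\e[\max]\le O(\log m)+O(\log n)=O(\log(m+n))$ expected rounds. \emph{Phase three} is the mop-up: once $a\ge m/2$ and $b\ge n/2$, every white vertex of $R'$ is forced with probability $1-(1-\frac{b+1}{n})^a \ge \frac12$ (since $\frac{b+1}{n}\ge\frac12$ and $a\ge 1$), and symmetrically for $R$. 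Thus each remaining white vertex survives a round with probability at most $\frac12$, and a union bound over the at most $m+n$ white vertices shows all are blue within $O(\log(m+n))$ expected rounds. Summing the three phases gives $\ept(K_{m,n},\{v\})=O(\log(m+n))$, hence $\ept(K_{m,n})=O(\log(m+n))$.

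For the lower bound with $c$ fixed, I would bound how fast $R'$ can fill. Because $|R|=c$ we always have $a\le c$, so each white vertex of $R'$ is forced with probability $1-(1-\frac{b+1}{n})^a \le \frac{c(b+1)}{n}$ by Bernoulli's inequality; summing over the at most $n-b$ white vertices gives $\e[b_{t+1}-b_t\mid b_t]\le c(b_t+1)$, equivalently $\e[b_{t+1}+1\mid b_t]\le (c+1)(b_t+1)$. Iterating the tower property yields $\e[b_t+1]\le 2(c+1)^t$ for every starting vertex (as $b_0+1\le 2$), so Markov's inequality gives $\pr(b_t=n)\le \frac{2(c+1)^t}{n+1}\le \frac12$ at $t^\ast:=\lfloor \log_{c+1}\frac{n+1}{4}\rfloor=\Omega(\log n)$. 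Then $R'$ is not fully blue at time $t^\ast$ with probability at least $\frac12$, so $\pt_{pzf}>t^\ast$ with probability at least $\frac12$ from every starting vertex, giving $\ept(K_{c,n},\{v\})\ge \frac12 t^\ast=\Omega(\log n)$ for all $v$ and hence $\ept(K_{c,n})=\Omega(\log n)$. Combined with the upper bound $O(\log(c+n))=O(\log n)$, this yields $\ept(K_{c,n})=\Theta(\log n)$.

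The main obstacle is the asymmetry between $m$ and $n$: one cannot fill one part completely and then the other, because a small blue set on one side forces the opposite side only slowly (for instance when one part is tiny), so the argument must grow both parts to a constant fraction in parallel before any mop-up. Making the growth step hold uniformly in $m$ and $n$ — in particular handling the small-$b$ regime, where the one-round increase is not yet concentrated — is the delicate point; the variance bound $\var(Y)\le\e[Y]$ together with an $O(1)$ startup to escape the constant-$b$ range is what makes phase two go through cleanly.
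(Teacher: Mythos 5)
Your proposal is correct, but it takes a genuinely different route from the paper on both halves of the statement. For the upper bound the paper does not rebuild any growth argument: it cites \cite[Lemma 2.5]{GH18-PZF} (that $\ept(G[N[v]]) = O(\log \deg v)$ for any vertex $v$) and applies it to each part in turn, getting $\ept(K_{m,n})=O(\log m+\log n)=O(\log(m+n))$ in two lines; your three-phase argument (first force, parallel multiplicative growth of both parts via Chebyshev, union-bound mop-up) is a self-contained re-derivation of essentially that lemma in the bipartite setting --- longer, but independent of the external result. The more interesting divergence is the lower bound. The paper first argues the small part $R$ becomes blue in $O(1)$ expected rounds, then works in the window $\sqrt{n}\le b\le \frac n2$ and uses Chebyshev to show that in each round the blue count grows by a factor of at most $3c$ with probability $1-O\!\left(\frac{1}{c\sqrt n}\right)$, so that with probability $\left(1-O\!\left(\frac{1}{c\sqrt n}\right)\right)^{\log_{3c}(\sqrt n/2)}=1-o(1)$ at least $\log_{3c}\!\left(\frac{\sqrt n}{2}\right)$ rounds are needed. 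You replace all of this with one expectation computation: since $a\le c$ in every state, Bernoulli gives $\e[b_{t+1}+1\mid \mathcal{F}_t]\le (c+1)(b_t+1)$, hence $\e[b_t+1]\le 2(c+1)^t$ by the tower property, and a single application of Markov's inequality at $t^*=\lfloor\log_{c+1}\frac{n+1}{4}\rfloor$ finishes. This supermartingale-style argument buys real simplicity: no concentration, no restriction to a $b$-window, no product of per-round success probabilities, and no preliminary step coloring $R$ blue --- a step whose role in the paper implicitly leans on the intuitive (but unproved) monotonicity that extra blue vertices only speed propagation, whereas your bound $a\le c$ holds unconditionally in every state. Two points to state carefully when writing it up: in your phase two, the indicators of distinct white vertices being forced are independent given the current state (they depend on disjoint coin flips), which is what justifies $\var(Y)\le \e[Y]$; and the conditional expectation bound in the lower bound should be taken with respect to the full state or history rather than $b_t$ alone --- it holds there because the bound is uniform over all states with $b_t$ blue vertices in $R'$, after which the tower property applies.
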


\begin{proof}
 For the upper bounds: It was shown in  \cite[Lemma 2.5]{GH18-PZF} that $\ept(G[N[v]]) = O(\log \deg v)$ for any vertex $v$.  This implies  $\ept(K_{m,n})=O(\log(m)+\log(n))$.  If $m\le n$, then  $\log(m)+\log(n)\le 2\log n$, so  $\ept(K_{m,n})=O(\log(n))$, which also implies $\ept(K_{m,n})=O(\log(m+n))$ (and no assumption $m\le n$ is needed on the latter). 

 Let $c$ be a fixed positive integer. We consider the lower bound on $\ept(K_{c,n})$. Let $R$ and $R'$ denote the partite sets of orders $c$ and $n$ respectively.   We show first that the expected number of rounds to color all vertices in $R$ blue is $O(1)$.  Suppose first that the one initial blue vertex is in $R$.  By Claim $(C1)$ established in the proof of Lemma 2.5 in \cite{GH18-PZF}, the probability of at least one new blue vertex in a round is at least one half, so  the expected time of the first force is at most 2. Once at least one vertex in $R'$ is blue,  the expected number of rounds to color $R$ blue is at most $\ept(K_{1.c})$.  Thus the expected number of rounds to color $R$ blue is a constant. 
 
 So suppose that all the vertices in $R$ are blue and let $b$ denote the current number of blue vertices. For each white vertex $v_1, \ldots, v_{n+c-b} \in R'$, let $X_i$ be the indicator random variable that $v_i$ is colored blue in the current round. Let $X=\sum_{i=1}^{n+c-b}X_i$, and
\[
    \pr(\text{$R \to v_i$})=1-\pr(\text{$\forall u \in R, u \not \to v_i$}) 
    =1-(1-\pr(\text{$u \to v_i$}))^c 
    =1-\left(1-\frac{1+b-c}{n}\right)^c.
\]
Using Bernoulli's inequality  for the first inequality below, we have
\bea
    \e[X]&=&\sum_{i=1}^{n+c-b}\e[X_i] \\
    & =&(n+c-b)\left(1-\left(1-\frac{1+b-c}{n}\right)^c\right) \\
    & \leq&(n+c-b)\left(1-\left(1+c\left(-\frac{1+b-c}{n}\right)\right)\right) \\
    &=&\frac{(n+c-b)(1+b-c)c}{n} \\
    &\leq& cb.
\eea
Since the $X_i$ are i.i.d. and $X_i^2 = X_i$,
 \[   \var[X]
    =(n+c-b)\left(1-\left(1-\frac{1+b-c}{n}\right)^c\right)\left(1-\frac{1+b-c}{n}\right)^c \\
    \leq\e[X] \leq cb.\]

 Consider the case in which $\sqrt{n}\leq b\leq \frac{n}{2}$, and define $h(b)$ to be the probability that the number of new blue vertices in the current round is at most $2cb$. Then  Chebyshev's inequality justifies the third inequality below:
\[
1-h(b)\leq\pr(X-cb\geq cb)\leq \pr(|X-\e[X]|\geq cb)\leq\frac{\var[X]}{(cb)^2}\leq\frac{1}{c\sqrt{n}}=O\left(\frac{1}{c\sqrt{n}}\right).
\]
Starting with $\sqrt{n}\leq b\leq \frac{n}{2}$ blue vertices and coloring at most $2cb$ additional blue vertices per round implies that the probability that there are at most $(3c)^rb$ blue vertices after $r$ rounds is at least $\left(h(b)\right)^r=\left(1-O\left(\frac{1}{c\sqrt{n}}\right)\right)^r$. Thus going from at most $\sqrt{n}$ blue vertices to at least $\frac{n}{2}$ blue vertices requires that $(3c)^r\sqrt{n}\geq \frac{n}{2}$, or $r\geq\log_{3c}\left(\frac{\sqrt{n}}{2}\right)$. Hence the probability is at least $\left(1-O\left(\frac{1}{c\sqrt{n}}\right)\right)^{\log_{3c}\left(\sqrt{n}/2\right)}=1-o(1)$ that it takes at least $\log_{3c}\left(\frac{\sqrt{n}}{2}\right)$ rounds for the number of blue vertices to increase from at most $\sqrt{n}$ to at least $\frac{n}{2}$. So $\ept(K_{c,n})=\Omega(\log(n))$.
\end{proof}

It is shown in \cite{GH18-PZF} that $\ept(G) = O(\rad(G) (\log n)^2)$ for connected graphs $G$ of order $n$. The next result implies that $\ept(G) = O(n)$ for connected graphs $G$ of order $n$. 

\begin{thm}\label{t:linupperbd}
Let $G$ be a connected graph of order $n$. Then $\ept(G, S) \leq  \frac{e}{e-1} (n-|S|)$ for any set $S$ of vertices of $G$. 
\end{thm}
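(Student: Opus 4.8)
The plan is to reduce the claim to a single one-round probability estimate and then run a potential (supermartingale) argument. The key lemma I would establish first is: whenever $G$ is connected, the current blue set $B$ is nonempty, and at least one vertex is white, the probability that at least one white vertex becomes blue in the next round is at least $1-\frac1e$. Granting this, let $\phi_t$ denote the number of white vertices after $t$ rounds, so $\phi_0=n-|S|$ and $\phi_t$ is nonincreasing (vertices never revert to white). Since a ``successful'' round drops $\phi$ by at least $1$ while an unsuccessful round leaves it unchanged, the lemma gives $\e[\phi_{t+1}\mid\mathcal F_t]\le \phi_t-(1-\tfrac1e)$ on the event $\{\phi_t\ge 1\}$. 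Writing $q=1-\frac1e$, the stopped process $M_t=\phi_t+qt$ is then a supermartingale, and optional stopping at the completion time $T=\pt_{pzf}(G,S)$ gives $q\,\e[T]=\e[M_T]\le M_0=n-|S|$ (using $\phi_T=0$), i.e. $\ept(G,S)=\e[T]\le\frac{e}{e-1}(n-|S|)$.

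To prove the key lemma I would use connectivity to select a blue vertex $u$ having at least one white neighbor (such a crossing edge exists because $B$ is neither empty nor all of $V$), and then bound below the probability that $u$ \emph{alone} forces one of its white neighbors, since that event is contained in the event ``at least one new blue vertex.'' Let $d=\deg u$ and let $j\ge 1$ be the number of white neighbors of $u$; the remaining $d-j$ neighbors are blue, and together with $u$ this gives $|N[u]\cap B|=d-j+1$, so each white neighbor is forced independently with probability $p=\frac{d-j+1}{d}$. Hence $u$ forces none of them with probability $(1-p)^j$, and since $1-p\le e^{-p}$ this is at most $e^{-pj}$. The crux is the inequality $pj\ge 1$, equivalently $j(d-j+1)\ge d$ for $1\le j\le d$: the left side is a concave quadratic in $j$ equal to $d$ at both endpoints $j=1$ and $j=d$, hence at least $d$ throughout. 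Therefore $(1-p)^j\le e^{-1}$, and the complementary event has probability at least $1-\frac1e$.

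The main obstacle is precisely this one-round estimate, and within it the bookkeeping identity $|N[u]\cap B|=d-j+1$ that makes $pj\ge1$ follow from a clean concavity argument; choosing the right $u$ and correctly counting the blue part of its closed neighborhood is where care is needed. A secondary technical point is justifying the optional stopping step: because the per-round success probability is bounded below by $q>0$ uniformly in the state, $T$ is stochastically dominated by a sum of $n-|S|$ geometric variables, so $\e[T]<\infty$ and the truncated identity $\e[M_{T\wedge t}]\le M_0$ passes to the limit $t\to\infty$. I would finally emphasize that the bound holds uniformly over all initial sets $S$ and uses only connectivity, with no finer structural hypotheses on $G$.
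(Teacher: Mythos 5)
Your proposal is correct, and its heart --- the one-round estimate --- is exactly the paper's: you pick a blue vertex $u$ with a white neighbor, parametrize by $d=\deg u$ and the split of $N(u)$ into blue and white parts, and bound the failure probability by $e^{-j(d-j+1)/d}\le e^{-1}$ using $1-p\le e^{-p}$ together with the observation that the concave quadratic $j(d-j+1)$ equals $d$ at both endpoints $j=1$ and $j=d$, hence is at least $d$ throughout $[1,d]$. (The paper's $j$ denotes $|N[u]\cap B|$ rather than the number of white neighbors, but that is the same computation under the relabeling $j\mapsto d-j+1$.) Where you genuinely diverge is in converting this per-round bound into the stated inequality: the paper waits for the first force (expected waiting time at most $\frac{e}{e-1}$, since the rounds before the first force are dominated by a geometric variable) and then applies reverse strong induction on $|S|$, whereas you run a potential/supermartingale argument on $\phi_t+qt$ with optional stopping. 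Both are valid; the induction is more elementary (no martingale theory needed), while your route avoids induction entirely and lets the uniform per-round bound do all the work in one pass, with the truncation-and-monotone-convergence step handling finiteness of $\e[T]$ cleanly. One remark: your own justification of the optional stopping step --- that $T$ is stochastically dominated by a sum of $n-|S|$ geometric variables with success probability $q=1-\frac{1}{e}$ --- already yields $\e[T]\le\frac{n-|S|}{q}$ directly by linearity of expectation, so the supermartingale layer is dispensable; indeed, the paper's induction is essentially a formalization of exactly that geometric-sum bound.
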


\begin{proof}
We prove this by reverse strong induction on $k = |S|$. It is immediate for $k = n$. Now fix some $k < n$ and suppose that the theorem is true for any $i > k$. Let $S$ be an initial set of blue vertices. Since $G$ is connected, there exists some  $b \in S$ with at least one white neighbor. Let  $d = \deg(b)$, so $d-j+1$ of the neighbors are white for some integer $j$ with $1 \leq j \leq d$. 

Suppose that there have been no forces yet in the graph. The probability that $b$ does not force any of its white neighbors in the current round is at most 
 \[\lp1-\frac{j}{d}\rp^{d-j+1} = \lp1-\frac{j}{d}\rp^{(d/j)(j(d-j+1)/d)}\leq \frac 1 e^{j(d-j+1)/d} \leq \frac{1}{e},\]
where  the first inequality follows from the fact that $(1-\frac 1 x)^x\le \frac 1 e$ for $x\ge 1$ and  the last inequality follows from the fact that $\frac{j(d-j+1)}{d}$ is minimized at $j = 1$ and $j = d$ for all real $j \in [1, d]$. 

If there have not been any forces yet, the probability of a force in the current round is at least $\frac{e-1}{e}$, so the expected number of rounds until the first force is at most $c = \frac{e}{e-1}$. After the first force, there are at least $k+1$ blue vertices. Therefore $\ept(G, S) \leq c + c(n-k-1) \leq c (n-k)$ by the induction hypothesis.
\end{proof}

\begin{cor}
If $G$ is a connected graph on $n$ vertices, then $\ept(G) = O(n)$.
\end{cor}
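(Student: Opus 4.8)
The plan is to read the corollary off directly from Theorem \ref{t:linupperbd}, which already supplies a linear upper bound on $\ept(G,S)$ for \emph{every} nonempty initial blue set $S$; the corollary is simply the specialization of that bound to single-vertex sets together with the definition of $\ept(G)$ as a minimum over singletons.

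First I would fix an arbitrary vertex $v \in V(G)$ and apply Theorem \ref{t:linupperbd} with $S = \{v\}$. Since $|S| = 1$, the theorem yields $\ept(G, \{v\}) \leq \frac{e}{e-1}(n-1)$. Next, recalling that $\ept(G) = \min\{\ept(G, \{v\}) : v \in V(G)\}$, the minimum over all one-vertex initial sets is at most the value attained at this particular $v$, so $\ept(G) \leq \ept(G, \{v\}) \leq \frac{e}{e-1}(n-1)$. Because $\frac{e}{e-1}$ is an absolute constant, the right-hand side is $O(n)$, which is exactly the asserted bound.

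I do not expect any genuine obstacle here: all of the analytic content has already been discharged in the proof of Theorem \ref{t:linupperbd}, namely the per-round failure-probability estimate obtained from $(1 - \tfrac{1}{x})^x \le \tfrac{1}{e}$ for $x \ge 1$, the observation that $\frac{j(d-j+1)}{d}$ is minimized at the endpoints $j=1$ and $j=d$, and the reverse strong induction on $k = |S|$. The corollary invokes that theorem only once, at the boundary case $|S| = 1$, so the only thing to verify is that the constant is independent of $G$ and $n$, which is immediate since $\frac{e}{e-1}$ does not depend on the graph. For this reason I would keep the proof to a couple of lines rather than reproving any of the probabilistic estimates.
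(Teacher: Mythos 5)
Your proposal is correct and is exactly the argument the paper intends: the corollary is stated without proof precisely because it follows immediately from Theorem \ref{t:linupperbd} applied to a singleton $S=\{v\}$, giving $\ept(G) \leq \ept(G,\{v\}) \leq \frac{e}{e-1}(n-1) = O(n)$. Nothing further is needed.
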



\end{document}